\documentclass[letterpaper, 10 pt, conference]{ieeeconf}  

\IEEEoverridecommandlockouts                              
\usepackage{amsmath}		
\usepackage{amscd}
\usepackage{amsfonts}
\usepackage{amssymb}		
\usepackage{mathtools}		

\usepackage[english]{babel}		
\usepackage[utf8]{inputenc}	
\usepackage{calc}
\usepackage{thumbpdf}			
\usepackage{latexsym}
\usepackage{xcolor}
\usepackage{enumerate}		
\usepackage{url}				
\usepackage{cite}	
\usepackage{graphics} 
\usepackage{epsfig} 
\usepackage{color}

\def\trace{{\rm Tr\,}}

\DeclareMathOperator*{\argmin}{arg\,min}

\newcommand{\R}{\mathbb{R}}
\newcommand{\N}{\mathbb{N}}

\newcommand{\Normal}{\mathcal{N}}
\newcommand{\F}{\mathcal{F}}

\newcommand{\Prob}{\mathbb{P}}
\newcommand{\E}{\mathbb{E}}

\newcommand{\RR}{\mathcal{R}}
\newcommand{\Y}{\mathcal{Y}}

\usepackage[bookmarks,bookmarksnumbered,colorlinks]{hyperref}
\hypersetup{linkcolor = black,anchorcolor = black,citecolor =
	black,filecolor = black,urlcolor = black}

\usepackage{algorithm}
\usepackage[noend]{algpseudocode}

\newtheorem{defn}{Definition}[section]
\newtheorem{lem}[defn]{Lemma}
\newtheorem{thm}[defn]{Theorem}
\newtheorem{cor}[defn]{Corollary}

\newtheorem{ass}[defn]{Assumption}
\newtheorem{rem}[defn]{Remark}

\definecolor{darkgreen}{rgb}{0,0.6,0}

\title{Closed-loop analysis of linear stochastic MPC with risk-averse constraints}
\author{Jonas Schießl, Ruchuan Ou, Michael H. Baumann, Timm Faulwasser, and Lars Grüne
\thanks{
The authors gratefully acknowledge that this work was funded by the Deutsche Forschungsgemeinschaft (DFG, German Research Foundation) – project number 499435839.}
\thanks{Jonas Schießl, Michael H. Baumann and Lars Grüne are with Mathematical Institute, University of Bayreuth, Germany,
        \tt\small \{jonas.schiessl,michael.baumann,lars.gruene\} @uni-bayreuth.de}%
\thanks{Ruchuan Ou and Timm Faulwasser are with the Institute of Control Systems, Hamburg University of Technology, Hamburg, Germany,
        \tt\small ruchuan.ou@tuhh.de, timm.faulwasser@ieee.org}%
}

\begin{document}

\maketitle

\pagestyle{empty}

\begin{abstract}
    Chance constraints are widely used in stochastic model predictive control (MPC) to enforce probabilistic state and input constraints in the presence of unbounded disturbances. 
    However, they only restrict violation probabilities and do not account for the magnitude of rare but severe constraint violations.
    In this paper, we extend the  indirect feedback approach for linear stochastic MPC from chance constraints to risk-averse constraints like the conditional value-at-risk. 
    For the resulting risk-averse MPC scheme, we establish recursive feasibility and closed-loop constraint satisfaction.
    Furthermore, based on a stochastic dissipativity notion and suitable conditions on the terminal ingredients we show that (near)-optimality of the averaged closed-loop performance can be ensured.
\end{abstract}

\section{INTRODUCTION}
The structured consideration of system uncertainty (either induced by plant-model mismatch or stemming from exogenous disturbances) is crucial in many control contexts. When stochastic optimal control or predictive control is considered, the consideration of chance constraints has become a standard tool, see, e.g., \cite{Hewing2018,Hewing2020,kohler2025predictive,schluter2023stochastic,farina2016stochastic,paulson2020stochastic,bavdekar2016stochastic,schluter2022stochastic}. 
However, chance constraints do, in general,  not allow to avoid rare outcomes with bad performance. Risk measures, on the other hand, are well suited to avoiding rare outcomes with bad performance~\cite{rockafellar2007coherent,shapiro2021lectures}. Examples include conditional value-at-risk, entropic value-at-risk and others. 
In previous work we analyzed stochastic MPC with risk-averse objectives \cite{schiessl2025towards}. Moreover, \cite{yin2023risk,dixit2021risk} suggest the consideration of  risk-averse constraint formulations in stochastic MPC. Yet, to the best of our knowledge, the formal closed-loop analysis of  stochastic MPC appears to be mostly limited to chance-constrained formulations,  cf.\ \cite{Hewing2020,kohler2025predictive,schluter2023stochastic,schluter2022stochastic}.

On this canvas, this paper makes first steps towards developing an analysis framework for stochastic MPC of linear systems subject to potentially non-Gaussian disturbances and considering  generic risk-averse constraint formulations. In particular, we extend the indirect feedback approach presented in \cite{Hewing2018,Hewing2020} to the consideration of risk-averse constraints using risk measures.  Furthermore, based on dissipativity notions for stochastic systems introduced in \cite{Schiessl2025,Schiessl2023a,Schiessl2025b}, we provide a rigorous  analysis of the averaged performance of the closed MPC loop for not necessarily quadratic cost functions. In contrast to \cite{Hewing2018}, we provide a lower bound on the averaged performance defined by a stationary solution \emph{and} we derive an upper bound which holds for general stage costs if the terminal ingredients are suitably chosen. The core contributions of the paper are twofold:
 (i) We extend the indirect feedback approach of \cite{Hewing2018,Hewing2020} to risk-averse constraint formulations using risk measures and to non-quadratic stage costs.
(ii) Using stochastic dissipativity concepts, we derive a novel lower bound on the averaged performance of stochastic linear MPC whereby we do not require Gaussianity of the disturbance distribution.

The remainder of this paper is structured as follows: Section \ref{sec:problem} introduces the setting and problem formulation, while Section \ref{sec:sMPC} recalls the indirect feedback approach by \cite{Hewing2018,Hewing2020}. Section \ref{sec:CLGuarantees} presents our main findings, while in Section \ref{sec:GaussianSetting} we focus on the special case of Gaussian uncertainty and quadratic stage costs. Section \ref{sec:Numerics} draws upon a numerical example to illustrate our findings, while the paper ends with conclusions in Section \ref{sec:Conclusion}.

\section{PROBLEM FORMULATION} \label{sec:problem}

Let $A \in \R^{n \times n}$, $B \in \R^{n \times l}$, such that the pair $(A,B)$ is controllable.
Then, for an \emph{i.i.d.} sequence $W(0),W(1),\ldots$ such that $W(k)$ is independent of $X(k)$ and $U(k)$ for all $k \in \N_0$, we consider linear stochastic systems of the form

\begin{equation} \label{eq:StochSystem}
  X(k+1) = AX(k) + BU(k) + W(k), \quad X(0) = X_0.
\end{equation}
Here, the initial condition $X_0$, the states  $X(k)$, the controls $U(k)$, and the noise $W(k)$ are considered to be random variables on the probability space  $(\Omega, \mathcal{F}, \Prob)$, i.e., 
$X(k) \in \RR(\Omega,\R^n)$, $U(k) \in \RR(\Omega,\R^l)$, and $W(k) \in \RR(\Omega,\R^m)$ with 
\[\RR(\Omega,\Y) := \{X: (\Omega,\mathcal{F},\Prob) \rightarrow (\Y,\mathcal{B}(\Y)) \mbox{ measurable} \}, \]
for $\Y=\R^n$, $\R^l$, or $\R^m$, where $\mathcal{B}(\Y)$ denotes the Borel $\sigma$-algebra on $\Y$.
Furthermore, we assume that the control sequence $\mathbf{U} = (U(0),U(1),\ldots)$ is adapted to the stochastic filtration $(\F_k)_{k \in \N_0}$ defined by
\begin{equation} \label{eq:Filtration}
    \F_k = \sigma(X(0),\ldots,X(k)), ~ \text{for all} ~ k \in \N_0.
\end{equation}
The last condition can be seen as a a causality requirement, which guarantees that we only take past and present but not future events into account for our control design. Moreover, note that the setting above allows the disturbance $W(k)$ to be non-Gaussian.

To extend system \eqref{eq:StochSystem} to an optimal control problem we consider stage costs in expectation of the form
\begin{equation} \label{eq:stagecost}
    \ell(X,U) = \E[g(X,U)],
\end{equation}
where the deterministic stage costs $g: \R^n \times \R^l \to \R$ is a continuous function bounded from below.
Additionally, we impose linear risk-averse constraints of the form 
\begin{equation} \label{eq:constraints}
    \begin{split}
        \rho(c_i^\top X) &\leq p_i, \quad i=1,\ldots,m_x \\
        \rho(d_i^\top U) &\leq q_i \quad i=1,\ldots,m_u.
    \end{split}
\end{equation}
Here, the mapping $\rho(Y)$ for $Y \in \RR(\Omega,\R)$ is a risk measure in the sense of the following definition.

\begin{defn} \label{defn:risk}
    A mapping $\rho: \RR(\Omega,\R) \to \R \cup \{+\infty\}$ is called \emph{risk measure} if it is 
    \begin{enumerate}[(i)]
        \item \emph{translative}, i.e., $\rho(Y+C) = \rho(Y) + c$ for all $Y \in \RR(\Omega,\R)$ and $c \in \R$. 
        \item \emph{monotone}, i.e.,  $\rho(Y_1) \geq \rho(Y_2)$ for all  $Y_1,Y_2 \in \RR(\Omega,\R)$ with $Y_1 \geq Y_2$ almost surely.\\[-4mm]
    \end{enumerate}
\end{defn}

Furthermore, we assume that the risk measure is law-invariant, i.e., $\rho(Y) = \rho(Z)$ for all $Y \sim Z$.

Commonly used law-invariant risk measures are, e.g., the value-at-risk  \eqref{eq:VaR} or the conditional value-at-risk \eqref{eq:CVaR}, which we will discuss in Section \ref{sec:GaussianSetting}.

Then, the stochastic optimal control problem with horizon $N \in \N \cup \{\infty\}$  reads
\begin{equation} \label{eq:stochOCP}
    \begin{split}
        \min_{\mathbf{U}} ~J_N(X_0,\mathbf{U}) &:= \sum_{k=0}^{N-1} \ell(X(k),U(k)) \\
        s.t. ~ X(k+1) &= AX(k) + BU(k) + W(k)\\ 
        X(0) &= X_0, \quad \sigma(U(k)) \subseteq \F_k, \\
        \rho(c_i^\top X(k)) &\leq p_i, \quad i=1,\ldots,m_x \\
        \rho(d_i^\top U(k)) &\leq q_i \quad i=1,\ldots,m_u
    \end{split}
\end{equation}
for which we want to approximate a solution on the infinite-horizon that satisfies the risk-averse constraints for all times.

\section{INDIRECT-FEEDBACK STOCHASTIC MPC} \label{sec:sMPC}
To calculate an approximation of the solution to \eqref{eq:stochOCP} for $N = \infty$ we use an indirect-feedback stochastic MPC scheme, cf.\ \cite{Hewing2018,Hewing2020}. The idea of the indirect-feedback approach is to use a deterministic  prediction $z(k) \in \R^n$ for evaluation of tightened constraints in open loop while the optimization of the cost is performed subject to the most recent state measurement.

To this end, we use a linear-affine feedback parametrization of the control during the open-loop optimization, i.e., in \eqref{eq:stochOCP} it holds that $U(k) = K X(k) + v_k$ 
for all $k \in \{0,\ldots,N-1\}$, where $K \in \R^{l \times n}$ is a fixed linear feedback-gain stabilizing the pair $(A,B)$ and $v_k \in \mathbb{V} \subseteq \R^l$ is the free control variable.
Then, by defining the prediction $z(k) \in \R^n$
\begin{equation} \label{eq:dynamicsz}
\begin{split}
    z(k+1) &= (A+BK) z(k) + Bv_k + \E[W(k)], \\
    z(0) = \E[X_0]
\end{split}
\end{equation}
the full state can be written as $X(k) = z(k) + E(k)$ with
\begin{equation} \label{eq:dynamicsE}
\begin{split}
    E(k+1) &= (A+BK)E(k) + W(k) - \E[W(k)], \\
    E(0) &= X_0-z_0.
\end{split}
\end{equation}
Since for a given $z_0 \in \R^n$ the dynamics~\eqref{eq:dynamicsz} are deterministic, we obtain
\begin{equation}
    \begin{split}
        \rho(c_i^\top X(k)) &= \rho(c_i^\top(z(k) + E(k))) \\
        &= c_i^\top z(k) + \rho(c_i^\top E(k)).
    \end{split}
\end{equation}
and 
\begin{equation}
\begin{split}
    \rho(d_i^\top U(k)) &= \rho(d_i^\top (K(z(k) + E(k))) + v_k) \\
    &= d_i^\top (Kz(k) + v_k) + \rho(d_i^\top KE(k)).
\end{split}
\end{equation}
Hence we can rewrite the risk-averse constraints as 
\begin{equation}
\begin{split}
     c_i^\top z(k) &\leq p_i - \rho(c_i^\top E(k)), ~ i=1,\ldots,m_x \\
     d_i^\top (Kz(k) + v_k) &\leq q_i - \rho(d_i^\top KE(k)), ~ i=1,\ldots,m_u.
\end{split}
\end{equation}
Note that since we fixed the feedback matrix $K$, the dynamics of $E$ from \eqref{eq:dynamicsE} do not depend on the control input and thus the evolution of $E$ is not affect by the optimization.
Using the feedback parametrization $U(k) = KX(k) + v_k$ and prediction $z(k)$ the resulting open-loop problem on horizon $N \in \N$ with initial values $x_j \in \R^n$, $z_j \in \R^n$, $E_j \in \RR(\Omega,\R^n)$ reads
\begin{equation} \label{eq:openloopIfSMPC}
    \begin{aligned}
        \min_{\mathbf{v}} \sum_{k=0}^{N-1} &\ell(X(k),U(k)) + F(X(N)) \\
        s.t.~X(k+1) &= (A+BK) X(k) + Bv_k + W(k) \\
        z(k+1) &= (A+BK)z(k) + Bv_k + \E[W(k)] \\
        E(k+1) &= (A+BK) E(k) + W(k) \\
        U(k) &= KX(k) + v_k, ~ v_k \in \mathbb{V}, ~ z(N) \in \mathbb{Z}_f \\
        X(0) &= x_j, ~ z(0) = z_j, E(0) = E_j\\
        c_i^\top z(k) &\leq p_i - \rho(c_i^\top E(k)),  i=1,\ldots,m_x \\
        d_i^\top (Kz(k) + v_k) &\leq q_i - \rho(d_i^\top KE(k)),  i=1,\ldots,m_x
    \end{aligned}
\end{equation}
and by 
\begin{align*}
    \mathcal{V}_N(x_j,z_j,E_j) &= \inf_{\mathbf{v}} \sum_{k=0}^{N-1} \ell(X(k),U(k)) + F(X(N)) \\
    &s.t.~\text{constraints from~\eqref{eq:openloopIfSMPC}}
\end{align*}
we denote the optimal value function on horizon $N \in \N$ corresponding to this problem.
Note that in contrast to \eqref{eq:stochOCP}, in problem we 
added terminal ingredients in~\eqref{eq:openloopIfSMPC}, namely the terminal set $\mathbb{Z}_f$ and the terminal penalty $F(X) = \E[g_f(X)]$ with $g_f: \R^n \to \R$. 
Such terminal ingredients are common in MPC to ensure recursive feasibility and stability, cf.\ \cite{mayne2000constrained,grune2017nonlinear}, and will also be used to derive our closed-loop guarantees in Section~\ref{sec:CLGuarantees}.
The resulting indirect feedback SMPC scheme is summarized in Algorithm~\ref{alg:ifStochMPC}. 

\begin{algorithm}
    \caption{Indirect feedback SMPC}
    \label{alg:ifStochMPC}
    \begin{algorithmic}
    \State \textbf{Input:} Fixed stabilizing feedback $K \in \R^{l \times n}$, feasible initial state $X_0$.
        \State Measure the state $x_0 = X_0(\omega)$, calculate $z_0 = \E[X^{cl}(0)]$ and set $X^{cl}(0,\omega)=x_j$, $Z^{cl}(0,\omega) = z_0$, $E_0 = X_0 - z_0$.
        \For{$j=0,1,\ldots$}\vspace{2mm}
            \State 1.) Solve the stochastic optimal control problem \eqref{eq:openloopIfSMPC} and obtain the solution $\mathbf{v}^* := (v_0^*,\ldots,v_{N-1}^*).$\vspace{2mm}
            \State 2.) Compute $E_{j+1} = (A+BK)E_j + W(j),$ predict 
            $z_{j+1} = (A+BK)z_j + Bv_0^* + \E[W(j)],$
            and set $Z^{cl}(j+1,\omega) = z_{j+1}$.\vspace{2mm}
            \State 3.) Set $V^{cl}(j,\omega) = v_0^*$, apply the feedback 
            $U^{cl}(j,\omega) = Kx_j + v_0^*$ 
            to system \eqref{eq:StochSystem} and measure the next state $x_{j+1} = X^{cl}(j+1,\omega)$.
        \EndFor
    \end{algorithmic}
\end{algorithm}

Note that due to the initializations at time $j=0$ we get
\begin{align*}
    Z^{cl}(0) &= z_0 = \E[X_0] = \E[X^{cl}(0)], \\
    X^{cl}(0) &= Z^{cl}(0) + E_0 = z_0 + E_0.
\end{align*}
However, while for times $j \geq 1$ it still holds that 
\begin{equation}
    X^{cl}(j) = Z^{cl}(j) + E_j
\end{equation}
in general $Z^{cl}(j) = \E[X^{cl}(j)]$ would not hold anymore but only
\begin{equation}
    Z^{cl}(j) = \E[X^{cl}(j) \mid \F_{j-1}]
\end{equation}
since the control value $U^{cl}(j,\omega) = v_0^*$ in Algorithm~\ref{alg:ifStochMPC} depends on the current measurements through optimization.
This particularly emphasizes that $Z^{cl}(j)$ is a random variable and that $Z^{cl}(j)$ as well as the closed-loop controls are depending on the whole history of states, i.e., $Z^{cl}(j+1)$ and $U^{cl}(j)$ are $F_j$-measurable.

\begin{rem} \label{rem:constraintTightening}
    Note that the dynamics of $E$ from~\eqref{eq:dynamicsE} do not depend on $v$ and hence are independent of the optimization.
    Therefore the sequences $\rho(c_i^\top E(j))$ and $\rho(d_i^\top KE(j))$, which are necessary for constraint evaluation, can be computed offline in advance.

    However, usually it is rather difficult to evaluate $\rho(c_i^\top E(j))$ and $\rho(d_i^\top KE(j))$ exactly unless we consider special cases as in Section~\ref{sec:GaussianSetting}. 
    One possibility to get at least an approximation of these terms is for example to use a Monte-Carlo sampling.
    Moreover, one could also further tighten the constraints if there exists sequences $\tilde{c}(j)$ and $\tilde{d}(j)$ such that
    \begin{equation}
         \rho(c_i^\top E(j)) \leq \tilde{c}(j), \qquad \rho(d_i^\top KE(j)) \leq \tilde{d}(j)
    \end{equation}
    holds.
    If such sequences are known, we can simply replace the terms $\rho(c_i^\top E(k))$ and $\rho(d_i^\top KE(k))$ in problem~\eqref{eq:openloopIfSMPC} by $\tilde{c}(j)$ and $\tilde{d}(j)$.
    While this of course would lead to a more conservative formulation, the results of this paper still hold if the terminal set is constructed appropriately as explained after Theorem~\ref{thm:recursiveFeasibility}.
\end{rem}

\section{CLOSED-LOOP GUARANTEES} \label{sec:CLGuarantees}

In this section we aim to provide closed-loop guarantees for Algorithm~\ref{alg:ifStochMPC}, particularly showing closed-loop constraint satisfaction and averaged (near-)optimality.
Note that this algorithm does not use the simplification from the Gaussian setting from Section~\ref{sec:GaussianSetting} and thus, our closed-loop guarantees are theoretically guaranteed for arbitrary initial conditions and distributions.
Moreover, as we see in Section~\ref{sec:GaussianSetting} all the assumptions made in this section can be satisfied in the linear-quadratic case, which enables us to transfer the derived results to the computationally more tractable Algorithm~\ref{alg:ifStochMPCmoments}. 

\subsection{Recursive Feasibility}
Before we deal with constraint satisfaction and optimality estimates, we first show that Algorithm~\ref{alg:ifStochMPCmoments} is recursively feasible, i.e., if we start with an initial condition $X_0$ for which the problem \eqref{eq:openloopIfSMPC} can be solved, then it can be solved for all subsequent steps of the MPC loop.

To this end, we make the following assumption, which is akin to \cite[Assumption~1]{Hewing2020}.

\begin{ass}  \label{ass:terminalSet}
    \begin{enumerate}[(i)]
        \item There exists $\mathbb{Z}_f \neq \emptyset$ such that  
        \begin{equation}
            c_i^\top  z \leq p_i - \sup_{j \in \N_0} \rho(c_i^\top E(j)).
        \end{equation}
        holds for all $z \in \mathbb{Z}_f$.
        \item There exists $v_f \in \mathbb{V}$ such that
        \begin{align*}
            (A+BK)z + Bv_f + \E[W(k)] \in \mathbb{Z}_f, \\
            d_i^\top (Kz + v_f) \leq q_i - \sup_{j \in \N_0} \rho(d_i^\top KE(j))
        \end{align*}
        holds for all $z \in \mathbb{Z}_f$.
    \end{enumerate}
\end{ass}

Using this assumption, we can establish recursive feasibility of Algorithm~\ref{alg:ifStochMPCmoments} in an analogous way to \cite[Theorem~1]{Hewing2020}.

\begin{thm} \label{thm:recursiveFeasibility}
    If problem~\eqref{eq:openloopIfSMPC} is feasible for the initial condition $z_0 = \E[X_0]$, then Algorithm~\ref{alg:ifStochMPC} is recursive feasible, i.e., feasible for all times $j \in \N_0$.
\end{thm}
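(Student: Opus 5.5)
The argument is the standard shifted-candidate induction, as in \cite[Theorem~1]{Hewing2020}. The key structural fact is that the tightened constraints in \eqref{eq:openloopIfSMPC} involve only the deterministic nominal sequence $z$ and the offline sequences $\rho(c_i^\top E(k))$, $\rho(d_i^\top KE(k))$ (cf.\ Remark~\ref{rem:constraintTightening}). Feasibility at time $j$ therefore depends only on $z_j$ and on the time index, not on the measured state $x_j$.

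I will check this indexing first. In Algorithm~\ref{alg:ifStochMPC} the error $E$ is propagated along closed-loop time as $E_{j+1}=(A+BK)E_j+W(j)-\E[W(j)]$, and the $W(j)$ are i.i.d. Hence the law of the predicted error $E(k)$ in the problem solved at time $j$, initialized with $E_j$, coincides with the law of $E(j+k)$ from \eqref{eq:dynamicsE}. By law invariance of $\rho$, the tightening used at prediction step $k$ of the problem at time $j$ is therefore $\rho(c_i^\top E(j+k))$, and similarly for the input constraints.

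The induction then runs as follows. Assume the problem at time $j$ is feasible with optimal (or merely feasible) input sequence $\mathbf{v}^*=(v_0^*,\ldots,v_{N-1}^*)$ and nominal trajectory $z^*(0)=z_j,\ldots,z^*(N)\in\mathbb{Z}_f$. Step 2 of the algorithm sets $z_{j+1}=(A+BK)z_j+Bv_0^*+\E[W(j)]=z^*(1)$. For time $j+1$ I take the shifted candidate $\tilde{\mathbf{v}}=(v_1^*,\ldots,v_{N-1}^*,v_f)$, with $v_f$ from Assumption~\ref{ass:terminalSet}(ii). Its nominal trajectory is $\tilde z(k)=z^*(k+1)$ for $k=0,\ldots,N-1$, together with $\tilde z(N)=(A+BK)z^*(N)+Bv_f+\E[W]$.

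Constraint checks for this candidate:
\begin{enumerate}[(i)]
\item For $k\le N-2$, the state and input constraints at step $k$ of the new problem read $c_i^\top z^*(k+1)\le p_i-\rho(c_i^\top E(j+1+k))$ and the analogous input condition. These are exactly the constraints of the old problem at step $k+1$, so they hold.
\item At $k=N-1$, the state constraint concerns $z^*(N)\in\mathbb{Z}_f$. It follows from Assumption~\ref{ass:terminalSet}(i), because the supremum bound gives $c_i^\top z^*(N)\le p_i-\sup_{j}\rho(c_i^\top E(j))\le p_i-\rho(c_i^\top E(j+N))$.
\item The input constraint at $k=N-1$ with $v_f$ holds by the second inequality in Assumption~\ref{ass:terminalSet}(ii), again bounding the specific $\rho$ term by the supremum.
\item $v_f\in\mathbb{V}$ by assumption, and $\tilde z(N)\in\mathbb{Z}_f$ by the invariance condition in Assumption~\ref{ass:terminalSet}(ii), where I use that $\E[W(k)]$ is constant in $k$ since the disturbances are i.i.d.
\end{enumerate}
Any remaining state constraint at $k=N$ (if included) is handled the same way via (i). Hence the problem at time $j+1$ is feasible. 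The base case $j=0$ is the hypothesis of Theorem~\ref{thm:recursiveFeasibility}, and induction gives feasibility for all $j\in\N_0$ and every $\omega$.

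The main obstacle is the bookkeeping of the time index in the tightening terms together with the role of law invariance. The tightening in \eqref{eq:openloopIfSMPC} is written as $\rho(c_i^\top E(k))$ with $E(0)=E_j$, and one must argue that this matches the shifted sequence $\rho(c_i^\top E(j+k))$. If the fixed-$E_j$ version proves awkward, I would instead read it as the offline sequence indexed by absolute time, which is what Remark~\ref{rem:constraintTightening} suggests. With this reading, steps $k\le N-2$ transfer verbatim, and only the terminal steps need the $\sup_j$ in Assumption~\ref{ass:terminalSet}. Everything else is routine.
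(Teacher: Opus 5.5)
Your proposal is correct and follows essentially the same argument as the paper. You take the shifted candidate $(v_1^*,\ldots,v_{N-1}^*,v_f)$, show via Assumption~\ref{ass:terminalSet} that it is admissible at time $j+1$ independently of $x_j$, and conclude by induction from the feasible initial problem. Your explicit check that the tightening at prediction step $k$ of the time-$j$ problem equals $\rho(c_i^\top E(j+k))$, which uses law invariance and the i.i.d.\ disturbances so that only the terminal steps need the suprema, supplies bookkeeping the paper leaves implicit.
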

\begin{proof}
    Let $\mathbf{v}^* = (v_0^*,\ldots,v_{N-1}^*)$ be an optimal solution of problem~\eqref{eq:openloopIfSMPC} at time $j \in \N_0$. 
    Then, by Assumption~\ref{ass:terminalCost} the sequence  
    \begin{equation}
         \tilde{\mathbf{v}} := (v_1^*,\ldots,v_{N-1}^*,v_f)
    \end{equation}
    satisfies the constraints in \eqref{eq:openloopIfSMPC} for
    $$z_{j+1} = (A+BK) z_j + Bv_0^* + E[W(j)]$$ 
    and all $x_j \in \R^n$.
    Hence $\tilde{\mathbf{v}}$ is an admissible control sequence for time $j+1$, which proves the claim since problem~\eqref{eq:openloopIfSMPC} is feasible at time $j=0$ by assumption.
\end{proof}

Note that since the feedback $K\in \R^{l \times n}$ stabilizes the pair $(A,B)$ there exists a distribution $P^s_E$ such that $E(j)$ converges in distribution to $P^s_E$ for suitable $E_0$, i.e, $E(j) \xrightarrow{d} P_E^s$ for $j \to \infty$.
Hence, the suprema in Assumption~\ref{ass:terminalSet} exist if the initial value $E_0$ is not degenerated, since the risk measure $r$ is assumed to be law-invariant. 

Furthermore, if one uses an upper bound on the risk as explained in Remark~\ref{rem:constraintTightening} we must also consider this in the construction of the terminal set $\mathbb{Z}_f$ by replacing $\rho(c_i^\top E(j))$ with $\tilde{c}(j)$ and $\rho(d_i^\top KE(j))$ with $\tilde{d}(j)$ respectively. 

\subsection{Constraint Satisfaction}

Since in closed loop the value $z_j$ represents $\E[X^{cl}(j) \mid \F_{j-1}]$ rather than the unconditioned expectation $\E[X^{cl}(j)]$ it is not obvious that the proposed risk-averse constraints \eqref{eq:constraints} are satisfied in closed-loop.
The following theorem shows that the considered restrictions in open loop are indeed sufficient to obtain closed-loop constraint satisfaction.

\begin{thm} \label{thm:constraints}
    The risk-averse constraints \eqref{eq:constraints} are satisfied in closed loop, i.e.,
    \begin{equation*}
        \begin{split}
            \rho(c_i^\top  X^{cl}(j)) &\leq p_i, \quad i=1,\ldots,m_x \\
            \rho(d_i^\top  U^{cl}(j)) &\leq q_i \quad i=1,\ldots,m_u,
        \end{split}
    \end{equation*}
    holds for all times $j \in \N_0$, where $X^{cl}(j)$ and $U^{cl}(j)$ are generated pointwisely according to Algorithm~\ref{alg:ifStochMPCmoments}.
\end{thm}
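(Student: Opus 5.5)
The plan is to work pointwise in $\omega$, using the decomposition $X^{cl}(j) = Z^{cl}(j) + E_j$, and then to push an almost-sure inequality through the two axioms of Definition~\ref{defn:risk}. Independence between $Z^{cl}(j)$ and $E_j$ is not needed. The random variable $Z^{cl}(j)$ depends on the whole history, since it is $\F_{j-1}$-measurable, so we never try to evaluate $\rho$ of the sum directly. We only use that the tightening term $\rho(c_i^\top E_j)$ is a deterministic number.

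\textbf{Step 1: tightened constraints hold pointwise.} By Theorem~\ref{thm:recursiveFeasibility}, for every $\omega$ the open-loop problem~\eqref{eq:openloopIfSMPC} at time $j$ is feasible. Its initial values are $z(0)=z_j=Z^{cl}(j,\omega)$ and $E(0)=E_j$. Its $k=0$ constraints then give, for all $\omega$,
\begin{align*}
c_i^\top Z^{cl}(j,\omega) &\le p_i - \rho(c_i^\top E_j),\\
d_i^\top\bigl(K Z^{cl}(j,\omega) + V^{cl}(j,\omega)\bigr) &\le q_i - \rho(d_i^\top K E_j).
\end{align*}
Here $V^{cl}(j,\omega)=v_0^*$. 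Note that $E_j$ is exactly the closed-loop error generated in Step~2 of Algorithm~\ref{alg:ifStochMPC}, whose law does not depend on the optimization (Remark~\ref{rem:constraintTightening}). Hence the right-hand sides are deterministic constants. Alternatively, these inequalities follow from the $k=1$ constraints of the feasible solution at time $j-1$, together with the shift argument in the proof of Theorem~\ref{thm:recursiveFeasibility}. For $j=0$ they hold because $z_0=\E[X_0]$ is deterministic and the initial problem is feasible.

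\textbf{Step 2: monotonicity and translativity.} Adding $c_i^\top E_j$ to the state inequality yields
\[
c_i^\top X^{cl}(j) = c_i^\top Z^{cl}(j) + c_i^\top E_j \le c_i^\top E_j + \bigl(p_i - \rho(c_i^\top E_j)\bigr) \quad \text{a.s.}
\]
Monotonicity and then translativity with the constant $p_i - \rho(c_i^\top E_j)$ give
\[
\rho(c_i^\top X^{cl}(j)) \le \rho(c_i^\top E_j) + p_i - \rho(c_i^\top E_j) = p_i .
\]
For the inputs we write $U^{cl}(j) = K X^{cl}(j) + V^{cl}(j) = K Z^{cl}(j) + V^{cl}(j) + K E_j$. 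The same two steps bound $\rho(d_i^\top U^{cl}(j))$ by $q_i$. If $\rho(c_i^\top E_j)=+\infty$, the tightened constraint would be infeasible, so the feasibility guaranteed by Theorem~\ref{thm:recursiveFeasibility} excludes this case.

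\textbf{Main obstacle.} The delicate point is Step~1, specifically the time-indexing of the tightening. The constant subtracted in the constraint that is active for $Z^{cl}(j)$ must be $\rho(c_i^\top E_j)$ for the actual closed-loop error $E_j = (A+BK)^j E_0 + \sum_{k=0}^{j-1}(A+BK)^{j-1-k}(W(k)-\E[W(k)])$. It must not be the risk of some freshly restarted error. I would make this explicit by noting that Algorithm~\ref{alg:ifStochMPC} passes $E(0)=E_j$ into the problem at time $j$. For the terminal phase I would use that the suprema in Assumption~\ref{ass:terminalSet} dominate every $\rho(c_i^\top E(j))$. With this alignment settled, the remaining argument uses only monotonicity and translativity; law-invariance is needed only so that the offline tightening constants are well defined.
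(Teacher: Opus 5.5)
Your proposal is correct and follows the same route as the paper. You take the tightened constraints at $k=0$ of the feasible open-loop problem at time $j$, with $z(0)=Z^{cl}(j,\omega)$ and $E(0)=E_j$, to get almost-sure bounds on $c_i^\top Z^{cl}(j)$ and $d_i^\top\bigl(KZ^{cl}(j)+V^{cl}(j)\bigr)$, and then apply monotonicity and translativity to $X^{cl}(j)=Z^{cl}(j)+E_j$ and $U^{cl}(j)=KZ^{cl}(j)+V^{cl}(j)+KE_j$. Your remarks make explicit points the paper leaves implicit: independence of $Z^{cl}(j)$ and $E_j$ is not needed, the tightening term is a deterministic constant, the case $\rho=+\infty$ is excluded by feasibility, and $E_j$ is the actual closed-loop error.
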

\begin{proof}
    For the closed-loop states and controls from Algorithm~\ref{alg:ifStochMPC} it holds that    
    \begin{align*}
        X^{cl}(j) &= Z^{cl}(j) + E_j \\
        U^{cl}(j) &= KX^{cl}(j) + V^{cl}(j) \\
        &= K (Z^{cl}(j) + E_j) + V^{cl}(j).
    \end{align*}
    
    Furthermore, due to the proposed risk-averse constraints in the open-loop problem~\eqref{eq:openloopIfSMPC} we can conclude that 
    \begin{align}
        c_i^\top  Z^{cl}(j) &\leq p_i - \rho(c_i^\top  E_j) \\
        d_i^\top (KZ^{cl}(j) + V^{cl}(j)) &\leq q_i - \rho(d_i^\top KE_j)
    \end{align}
    holds almost surely.

    Hence, we get by monotonicity and translativity of the risk measure, cf.\ Definition~\ref{defn:risk}, that
    \begin{align*}
        \rho(c_i^\top  X^{cl}(j)) &= \rho(c_i^\top  (Z^{cl}(j) + E_j) \\
        &\leq p_i - \rho(c_i^\top  E_j) + \rho(c_i^\top  E_j) = p_i
    \end{align*}
    and 
    \begin{align*}
        \rho(d_i^\top  U^{cl}(j)) &= \rho(d_i^\top  K (Z^{cl}(j) + E_j) + V^{cl}(j)) \\
        &\leq q_i - \rho(d_i^\top  K E_j) + \rho(d_i^\top  K E_j) = q_i
    \end{align*}
    holds for all $j \in \N_0$.
\end{proof}

Note that the proof for the constraint satisfaction relies on the fact that we can split the closed-loop state into a stochastic part $E(j)$, which is independent of the control, and a nominal part $z(j)$, which we can restrict almost surely in a suitable way.
Hence, we conjecture that our results can also be obtained for different splittings and parametrizations of the control which leads to a splitting with the same properties.

\subsection{Averaged Performance Optimality} \label{sec:Performance}
As the final part of our closed-loop analysis we will give optimality estimates for the averaged performance. 
These findings will be based on a stochastic dissipativity notion developed in \cite{Schiessl2025}.
There it was shown that in contrast to the deterministic setting (strict) dissipativity notions can be formulated on different layers, such as moments, distributions or random variables. 
However, in the following we will use the notion formulated with respect to random variables, which leads to the following stationarity concept.

\begin{defn}
    A pair of stochastic processes $(\mathbf{X}^s,\mathbf{U}^s)$ given by
    \begin{equation} \label{eq:sys_stat}
        X^s(k+1) = AX^s(k) + BU^s(k) + EW(k)
    \end{equation}
    with $U^s(k) = \pi^s(X^s(k))$ is called stationary for system \eqref{eq:StochSystem} if $X^s(k)$ and $U^s(k)$ satisfy the constraints \eqref{eq:constraints} for all times $k \in \N_0$
    and there exist probability distributions $P^s_X$, $P^s_U$, and $P^s_{X,U}$ with
    \begin{equation*}
    \begin{split}
        X^s(k) \sim P^s_X, \quad U(k) \sim P^s_U, \quad (X^s(k),U^s(k)) \sim P^s_{X,U}
    \end{split}
    \end{equation*}
    for all $k \in \N_0$. 
\end{defn}

Using this definition of a stationary process as the replacement of the deterministic steady state, we can define stochastic dissipativity in the following way, where we denote by $\ell(\mathbf{X}^s,\mathbf{U}^s)$ the stage costs of the stationary pair, which are independent of $k$ due to the stationarity of the distributions.

\begin{defn} \label{defn:stochDissi}
    Consider a pair of stationary stochastic processes $(\mathbf{X}^s,\mathbf{U}^s)$ with $\vert \ell(\mathbf{X}^s,\mathbf{U}^s) \vert < \infty$.
    Then, we call the stochastic optimal control problem \eqref{eq:stochOCP} stochastically dissipative at $(\mathbf{X}^s,\mathbf{U}^s)$, if there exists a law-invariant storage function $\lambda: \RR(\Omega,\R^n) \to \R$ bounded from below such that 
    \begin{equation} \label{eq:DissiIneq}
    \begin{split}
        \ell(X(k),&U(k)) - \ell(\mathbf{X}^s,\mathbf{U}^s)
        + \lambda(X) \\
        &- \lambda(AX(k)+ BU(k) + W(k)) \geq 0
    \end{split}
    \end{equation}
    holds for all $k \in \N_0$ and all $X(k)$, $U(k)$ satisfying \eqref{eq:Filtration} and the constraints \eqref{eq:constraints}.
\end{defn}

Based on stochastic dissipativity we can now establish a lower bound on the asymptotic averaged performance for all admissible control sequences $\mathbf{U}$ of problem \eqref{eq:stochOCP}. 
Here, for a given control sequence $\mathbf{U}$ and initial state $X_0$ we denote by $X_{\mathbf{U}}(k,X_0)$ the solution to \eqref{eq:StochSystem} at time $k$.

\begin{thm} \label{thm:lowerBoundPerformance}
    Assume that the stochastic optimal control problem \eqref{eq:stochOCP} is dissipative at $(\mathbf{X}^s,\mathbf{U}^s)$.
    Then, it holds that 
    \begin{equation}
        \liminf_{L \to \infty} \frac{1}{L} \sum_{k=0}^{L-1} \ell(X_{\mathbf{U}}(k,X_0),U(k)) \geq \ell(\mathbf{X}^s,\mathbf{U}^s)
    \end{equation}
    for all $\mathbf{U}$ and $X_0$ such that $U(k)$ and $X_{\mathbf{U}}(k,X_0)$ satisfy the constraints \eqref{eq:constraints} and the filtration condition \eqref{eq:Filtration} for all $k \in \N_0$.
\end{thm}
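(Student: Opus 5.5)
The plan is the standard telescoping argument from deterministic economic MPC, carried out on the level of random variables. Fix an admissible pair $(\mathbf{U},X_0)$ and abbreviate $X(k) := X_{\mathbf{U}}(k,X_0)$. By assumption, $X(k)$ and $U(k)$ satisfy the constraints \eqref{eq:constraints} and the filtration condition \eqref{eq:Filtration} for every $k \in \N_0$. Hence the dissipation inequality \eqref{eq:DissiIneq} from Definition~\ref{defn:stochDissi} can be applied at every time $k$, with $X = X(k)$. Since $X(k+1) = AX(k)+BU(k)+W(k)$ by \eqref{eq:StochSystem}, it gives
\begin{equation*}
    \ell(X(k),U(k)) \geq \ell(\mathbf{X}^s,\mathbf{U}^s) + \lambda(X(k+1)) - \lambda(X(k)) \quad \text{for all } k \in \N_0 .
\end{equation*}
Here I read the term $\lambda(X)$ in \eqref{eq:DissiIneq} as $\lambda(X(k))$. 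The stationary cost $\ell(\mathbf{X}^s,\mathbf{U}^s)$ is a finite constant by assumption, so it can be moved freely between the two sides.

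Next I will sum this inequality over $k = 0,\ldots,L-1$. The storage terms telescope, and dividing by $L$ yields
\begin{equation*}
    \frac{1}{L}\sum_{k=0}^{L-1} \ell(X(k),U(k)) \geq \ell(\mathbf{X}^s,\mathbf{U}^s) + \frac{\lambda(X(L)) - \lambda(X_0)}{L}.
\end{equation*}
It remains to show that the last term has nonnegative $\liminf$. Since $\lambda$ is bounded from below, there is $M \in \R$ with $\lambda(X(L)) \geq M$ for all $L$. Moreover $\lambda(X_0)$ is a fixed real number, because $\lambda$ maps into $\R$. Therefore
\begin{equation*}
    \frac{\lambda(X(L)) - \lambda(X_0)}{L} \geq \frac{M - \lambda(X_0)}{L} \to 0,
\end{equation*}
and taking $\liminf_{L\to\infty}$ on both sides gives the claim.

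The only delicate point is checking that \eqref{eq:DissiIneq} may legitimately be used along the whole trajectory. This requires that the class of pairs $(X(k),U(k))$ in Definition~\ref{defn:stochDissi} covers every admissible trajectory, which holds because admissibility is defined by exactly the same constraint and filtration conditions. The other point to watch is that all quantities involved are finite, so the arithmetic is not of the form $\infty - \infty$. If some stage cost $\ell(X(k),U(k))$ equals $+\infty$, the averaged sum is $+\infty$ and the inequality holds trivially. Otherwise every term is finite, because $g$ is bounded from below (so $\ell > -\infty$) and $\lambda$ is real valued. Law-invariance of $\lambda$ is not needed for this argument; I expect no real obstacle beyond this bookkeeping.
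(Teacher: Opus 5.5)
Your proposal is correct and is essentially the paper's proof: apply the dissipation inequality along the admissible trajectory, telescope the storage terms, and use the lower bound on $\lambda$ together with finiteness of $\lambda(X_0)$ to make the boundary term vanish as $L \to \infty$. Your extra bookkeeping (the case of an infinite stage cost, and the observation that law-invariance of $\lambda$ is not used) is a harmless refinement of the same argument.
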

\begin{proof}
    By dissipativity we know that there exists a uniform lower bound $-C^l_{\lambda} < 0$ on $\lambda$ such that
    \begin{equation*}
    \begin{split}
        \frac{1}{L} &\sum_{k=0}^{L-1} \ell(X(k), U(k)) \\
        &\geq \frac{1}{L} \sum_{k=0}^{L-1} \ell(\mathbf{X}^s,\mathbf{U}^s) - \lambda(X_{\mathbf{U}}(k)) + \lambda(X(k+1)) \\
        &\geq \ell(\mathbf{X}^s,\mathbf{U}^s) - \frac{\lambda(X_0)}{L} - \frac{C^l_{\lambda}}{L},
    \end{split}
    \end{equation*}
    which proves the claim by letting $K$ go to infinity.
\end{proof}

Note that the lower bound from Theorem~\ref{thm:lowerBoundPerformance} also holds for the closed-loop solution $(\mathbf{X}^{cl},\mathbf{U}^{cl})$ since it satisfies the constraints due to Theorem~\ref{thm:constraints}.

Next we will show that we can also bound the closed-loop performance from above given suitable terminal ingredients as defined in the following assumption.

\begin{ass} \label{ass:terminalCost}
    There exists a stationary pair $(\mathbf{X}^s,\mathbf{U}^s)$ and a constant $C_f \geq 0$ such that for all $X \in \RR(\Omega,\R^n)$ and $v_f$ from Assumption~\ref{ass:terminalSet} the inequality
    \begin{equation}
    \begin{split}
        &F(AX + B(KX+v_f) + W(N)) \\
        & \quad \leq F(X) - \ell(X,KX+v_f)) + \ell(\mathbf{X}^s,\mathbf{U}^s) + C_f
    \end{split} 
    \end{equation}
    holds.
\end{ass}

The following theorem introduces the upper bound on the asymptotic averaged performance based on this assumption.

\begin{thm} \label{thm:upperBoundPerformance}
    Let Assumption~\ref{ass:terminalCost} hold.
    Then, it holds that
    \begin{equation}
        \limsup_{L \to \infty} \frac{1}{L} \sum_{k=0}^{L-1} \ell(X^{cl}(k),U^{cl}(k)) \leq \ell(\mathbf{X}^s,\mathbf{U}^s) + C_f.
    \end{equation}
\end{thm}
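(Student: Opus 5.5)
We follow the standard economic MPC argument with terminal cost, adapted to the closed loop generated pointwise by Algorithm~\ref{alg:ifStochMPC}. The central quantity is the optimal value along the closed loop, $\mathcal{V}_N(X^{cl}(j),Z^{cl}(j),E_j)$. We aim to show the one-step decrease
\begin{equation*}
  \E\big[\mathcal{V}_N(X^{cl}(j+1),Z^{cl}(j+1),E_{j+1})\big] \leq \E\big[\mathcal{V}_N(X^{cl}(j),Z^{cl}(j),E_j)\big] - \ell(X^{cl}(j),U^{cl}(j)) + \ell(\mathbf{X}^s,\mathbf{U}^s) + C_f .
\end{equation*}
Summing this over $j=0,\ldots,L-1$ telescopes. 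Dividing by $L$ and using that $\mathcal{V}_N$ is bounded from below then gives the claim as $L\to\infty$. For the lower bound we use that $g$ is bounded below, and we assume $g_f$ is bounded below or that $\mathcal{V}_N \geq -C$ uniformly. We also assume $\mathcal{V}_N(X_0,z_0,E_0)<\infty$, which holds by initial feasibility.

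To obtain the decrease, fix $j$ and the realization up to time $j$. Take the optimal sequence $\mathbf{v}^*=(v_0^*,\ldots,v_{N-1}^*)$ and the shifted candidate $\tilde{\mathbf{v}}=(v_1^*,\ldots,v_{N-1}^*,v_f)$. By the proof of Theorem~\ref{thm:recursiveFeasibility}, $\tilde{\mathbf{v}}$ is admissible at time $j+1$. Its cost, starting from $X(0)=x_{j+1}$, bounds $\mathcal{V}_N$ at time $j+1$ from above. Taking the conditional expectation given $\F_j$, the randomness of $x_{j+1}=(A+BK)x_j+Bv_0^*+W(j)$ comes only from $W(j)$, which is independent of $\F_j$ and i.i.d. Hence the predicted process started at $x_{j+1}$ with inputs $v_1^*,\ldots$ has the same law as the tail $X(1),\ldots,X(N)$ of the open-loop prediction at time $j$ driven by $\mathbf{v}^*$. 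The first $N-1$ stage costs of the candidate therefore coincide in conditional expectation with $\sum_{k=1}^{N-1}\ell(X(k),U(k))$ from the time-$j$ problem.

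The remaining terms are the stage cost at the predicted $X(N)$ with input $KX(N)+v_f$, plus the terminal cost at the next state. By Assumption~\ref{ass:terminalCost}, applied to $X=X(N)$ (using law invariance, since $W(N)$ has the same law as any $W(k)$), these terms are at most $F(X(N))+\ell(\mathbf{X}^s,\mathbf{U}^s)+C_f$. Together with the shift, this gives
\begin{equation*}
  \E\big[\mathcal{V}_N(\cdot_{j+1}) \mid \F_j\big] \leq \mathcal{V}_N(\cdot_j) - g(x_j,Kx_j+v_0^*) + \ell(\mathbf{X}^s,\mathbf{U}^s) + C_f .
\end{equation*}
Here the first open-loop stage cost equals $g(x_j,u_j)$, because $X(0)=x_j$ is deterministic in the open-loop problem. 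Taking the full expectation yields the displayed decrease, since $\E[g(X^{cl}(j),U^{cl}(j))]=\ell(X^{cl}(j),U^{cl}(j))$.

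The main obstacle is the measure-theoretic step of conditioning on $\F_j$. It has three parts. First, we must justify that the open-loop problem solved at the realization $x_j$ is exactly the conditional law of the future closed-loop and predicted states; this is where the independence of $W(j)$ from $X(j)$ and $U(j)$ is used. Second, we need measurability of the optimal value and of the (near-)optimal selection $v_0^*$ as functions of $(x_j,z_j)$. If a minimizer does not exist, we would use an $\varepsilon$-optimal selector and let $\varepsilon\to0$. Third, we must check integrability, so that the tower property applies and the expectations of $\mathcal{V}_N$ are finite. Finally, we note that the bound is unaffected by $Z^{cl}(j)$ being random, since the shift argument is pointwise in $\omega$.
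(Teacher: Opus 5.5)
Your proposal is correct and follows essentially the same route as the paper. The shifted candidate $(v_1^*,\ldots,v_{N-1}^*,v_f)$ bounds $\E[\mathcal{V}_N(X^{cl}(j+1),Z^{cl}(j+1),E_{j+1})\mid\F_j]$, Assumption~\ref{ass:terminalCost} absorbs the appended stage and terminal cost, and then taking expectations and telescoping with a lower bound on $\mathcal{V}_N$ gives the claim; your write-up is also more careful than the paper's on two points the paper leaves implicit: it states the one-step decrease with the correct inequality direction, and it makes explicit that one needs a lower bound on $g_f$ (not only the bound $C_g$ on $g$) and finiteness of $\E[\mathcal{V}_N(X_0,z_0,E_0)]$.
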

\begin{proof}
    Consider a given measurement $x_j = X^{cl}(j,\omega) \in \R^n$, prediction $z_j = Z^{cl}(j,\omega)$, and $E_j \in \RR(\Omega,\R^n)$, and assume that
    \begin{equation}
        \mathbf{v}^* \in \argmin \mathcal{V}_N(X^{cl}(j,\omega),Z^{cl}(j,\omega),E_j)
    \end{equation}
    holds, where $\mathcal{V}_N$ the optimal value function to problem~\eqref{eq:openloopIfSMPC}.
    Furthermore, set
    \begin{align*}
        X^*(k+1) &= AX^*(k) + BU^*(k) + W(k), \quad X^*(0) = x_j \\
        U^*(k) &= KX^*(k) + v^*_k.
    \end{align*}
    
    Since $\tilde{\mathbf{v}} = (v_1^*,\ldots,v_{N-1}^*,v_f)$ with $v_f \in \mathbb{V}$ from Assumption~\ref{ass:terminalSet} is admissible for time $j+1$, cf.\ Theorem~\ref{thm:recursiveFeasibility}, we then get
    \begin{align*}
        \mathcal{V}_N&(X^{cl}(j,\omega),Z^{cl}(j,\omega),E_j) \\
        &- \E \left[ \mathcal{V}_N(X^{cl}(j+1),Z^{cl}(j+1),E_{j+1}) \mid \F_j \right] (\omega) \\
        \leq& \sum_{k=0}^{N-1} \ell(X^*(k),U^*(k)) + F(X^*(N)) \\
        &- \bigg( \sum_{k=0}^{N-2} \ell(X^*(k+1),U^*(k))) \\
        & \qquad + \ell(X^*(N),KX^*(N)+v_f) \\
        & \qquad + F \left( AX(N) + B(KX(N)+v_f)) + W(N) \right) \bigg) \\
        =& \, \ell(X^{cl}(j,\omega),\pi_0(X^{cl}(j,\omega))) \\ 
        &- \ell(X(N),KX(N)+v_f)) + F(X(N)) \\
        &- F(AX(N) + B(KX(N)+v_f) + W(N)).
    \end{align*}
    Using Assumption~\ref{ass:terminalCost} this implies 
    \begin{align*}
        \mathcal{V}_N&(X^{cl}(j,\omega),Z^{cl}(j,\omega),E_j) \\
        &- \E \left[ \mathcal{V}_N(X^{cl}(j+1),Z^{cl}(j+1),E_{j+1}) \mid \F_j \right] (\omega) \\
        \leq& \ell(X^{cl}(j,\omega),\pi_0(X^{cl}(j,\omega))) - \ell(\mathbf{X}^s,\mathbf{U}^s) - C_f.
    \end{align*}
    
    Taking the expectation yields
    \begin{equation*}
    \begin{split}
        \ell(X^{cl}(j),U^{cl}(j)) &\leq \ell(\mathbf{X}^s,\mathbf{U}^s) + C_f \\
        & + \E[\mathcal{V}_N(X^{cl}(j),Z^{cl}(j),E_j)] \\
        &- \E[\mathcal{V}_N(X^{cl}(j+1),Z^{cl}(j+1),E_{j+1})]
    \end{split}
    \end{equation*}
    and thus, we get
    \begin{equation*}
    \begin{split}
        \limsup_{L \to \infty} &\frac{1}{L} \sum_{j=0}^{L-1} \ell(X^{cl}(j),U^{cl}(j)) \\
        \leq \limsup_{L \to \infty} &\frac{1}{K} \bigg(\E[\mathcal{V}_N(X^{cl}(j),Z^{cl}(j),E_j)] \\
        &\quad - \E[\mathcal{V}_N(X^{cl}(j+1),Z^{cl}(j+1),E_{j+1})] \bigg) \\
        &+ \ell(\mathbf{X}^s,\mathbf{U}^s) + C_f \\
        \leq \limsup_{L \to \infty}& \frac{1}{L} \E[\mathcal{V}_N(X_0,z_0,E_0)] - \frac{C_g}{L} + \ell(\mathbf{X}^s,\mathbf{U}^s) + C_f \\
        =& \ell(\mathbf{X}^s,\mathbf{U}^s) + C_f
    \end{split}
    \end{equation*}
    where $C_g \in \R$ is a lower bound on the deterministic stage costs $g$.
\end{proof}

To conclude the findings of this section, the following result combines Theorem~\ref{thm:lowerBoundPerformance} and Theorem~\ref{thm:upperBoundPerformance} to show (near-)optimality of closed-loop solutions in the averaged performance sense.

\begin{cor} \label{cor:performance}
    Let Assumptions~\ref{ass:terminalSet} and ~\ref{ass:terminalCost} hold and assume that the stochastic optimal control problem~\eqref{eq:stochOCP} is stochastically dissipative at $(\mathbf{X}^s,\mathbf{U}^s)$. 
    Then, the closed-loop solution from Algorithm~\ref{alg:ifStochMPC} has near-optimal averaged performance, i.e.,
    \begin{equation*}
    \begin{split}
        \ell(&\mathbf{X}^s,\mathbf{U}^s) \leq \liminf_{K \to \infty} \frac{1}{L} \sum_{k=0}^{L-1} \ell(X^{cl}(k),U^{cl}(k)) \\
        &\leq \limsup_{K \to \infty} \frac{1}{L} \sum_{k=0}^{L-1} \ell(X^{cl}(k),U^{cl}(k)) \leq \ell(\mathbf{X}^s,\mathbf{U}^s) + C_f.
    \end{split}
    \end{equation*}
    Moreover, if $C_f = 0$ holds in Assumption~\ref{ass:terminalCost}, then the closed-loop solution has optimal averaged performance, i.e.,
    \begin{equation*}
        \lim_{L \to \infty} \frac{1}{L} \sum_{k=0}^{L-1} \ell(X^{cl}(k),U^{cl}(k)) = \ell(\mathbf{X}^s,\mathbf{U}^s).
    \end{equation*}
\end{cor}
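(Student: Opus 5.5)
The plan is to assemble the corollary directly from Theorem~\ref{thm:constraints}, Theorem~\ref{thm:lowerBoundPerformance} and Theorem~\ref{thm:upperBoundPerformance}, applied to the closed-loop pair $(\mathbf{X}^{cl},\mathbf{U}^{cl})$ generated by Algorithm~\ref{alg:ifStochMPC}.

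\textbf{Step 1: well-posedness of the closed loop.} Under Assumption~\ref{ass:terminalSet} and feasibility of \eqref{eq:openloopIfSMPC} at the initial condition, Theorem~\ref{thm:recursiveFeasibility} guarantees that Algorithm~\ref{alg:ifStochMPC} is feasible for all $j\in\N_0$. Hence the processes $\mathbf{X}^{cl}$ and $\mathbf{U}^{cl}$ are defined for all times.

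\textbf{Step 2: admissibility for the lower bound.} I will check that $(\mathbf{X}^{cl},\mathbf{U}^{cl})$ is admissible for Theorem~\ref{thm:lowerBoundPerformance}.
\begin{enumerate}[(i)]
\item Theorem~\ref{thm:constraints} gives $\rho(c_i^\top X^{cl}(j))\le p_i$ and $\rho(d_i^\top U^{cl}(j))\le q_i$ for all $j$.
\item For the causality condition \eqref{eq:Filtration}, I use that $U^{cl}(j)=KX^{cl}(j)+V^{cl}(j)$. Here $V^{cl}(j)$ is obtained from the optimizer at the data $(X^{cl}(j),Z^{cl}(j),E_j)$.
\item $Z^{cl}(j)$ is $\F_{j-1}$-measurable by construction, as noted after Algorithm~\ref{alg:ifStochMPC}.
\item $E_j = X^{cl}(j)-Z^{cl}(j)$ is then determined by the state history.
\end{enumerate}
So $U^{cl}(j)$ is $\F_j$-measurable, provided a measurable selection of the minimizer is used. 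This is the only point needing care, and I would simply assume a measurable selection of $\argmin$.

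Theorem~\ref{thm:lowerBoundPerformance}, together with stochastic dissipativity at $(\mathbf{X}^s,\mathbf{U}^s)$, then yields
\[
\liminf_{L\to\infty}\frac1L\sum_{k=0}^{L-1}\ell(X^{cl}(k),U^{cl}(k))\ge \ell(\mathbf{X}^s,\mathbf{U}^s).
\]

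\textbf{Step 3: upper bound.} Theorem~\ref{thm:upperBoundPerformance} under Assumption~\ref{ass:terminalCost} gives the $\limsup$ bound $\ell(\mathbf{X}^s,\mathbf{U}^s)+C_f$. The middle inequality $\liminf\le\limsup$ is trivial, which completes the chain of inequalities.

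One consistency issue needs care: the same stationary pair must appear in Assumption~\ref{ass:terminalCost} and in the dissipativity assumption. I read the corollary as requiring this, so the two bounds refer to the same value $\ell(\mathbf{X}^s,\mathbf{U}^s)$.

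\textbf{Step 4: the case $C_f=0$.} If $C_f=0$, the $\liminf$ and $\limsup$ coincide with $\ell(\mathbf{X}^s,\mathbf{U}^s)$, so the limit exists and equals this value.

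\textbf{Main obstacle.} I do not expect a real obstacle. The only subtle point is verifying the filtration condition in Step 2, which relies on measurability of the optimizer map; I would assume a measurable selection as indicated there.
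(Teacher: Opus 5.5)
Your proposal is correct and follows the paper's proof. The paper simply combines Theorem~\ref{thm:lowerBoundPerformance} with Theorem~\ref{thm:upperBoundPerformance}, and remarks right after the former that it applies to the closed loop because of the constraint satisfaction from Theorem~\ref{thm:constraints}. Your extra checks make explicit what the paper leaves implicit: recursive feasibility, $\F_j$-measurability of $U^{cl}(j)$ via a measurable selection of the minimizer, and that the same stationary pair enters both the dissipativity assumption and Assumption~\ref{ass:terminalCost}.
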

\begin{proof}
    Follows by Theorem~\ref{thm:lowerBoundPerformance} and Theorem~\ref{thm:upperBoundPerformance}.
\end{proof}

\section{MOMENT-BASED REFORMULATION FOR LINEAR-QUADRATIC PROBLEMS WITH GAUSSIAN NOISE} \label{sec:GaussianSetting}

Although our theory applies to general costs and disturbances, the open-loop problems~\eqref{eq:openloopIfSMPC} are in general hard to solve.
In this section, we make some simplifications that enable us to obtain an implementable version of Algorithm~\ref{alg:ifStochMPC}, which only uses information about the expectation and covariances of the appearing quantities.

We consider linear-quadratic stage costs of the form 
\begin{equation} \label{eq:stagecostLQP}
    \ell(X,U) = \E[X^\top  Q X + U^\top  R U] = \E[ \Vert X \Vert_Q^2 + \Vert U \Vert_R^2 ],
\end{equation}
where $Q \in \R^{n \times n}$ is symmetric, positive semi-definite, and $R \in \R^{l \times l}$ is symmetric and positive definite.
Furthermore, we consider a terminal penalty 
\begin{equation} \label{eq:terminalCostLQP}
    F(X) = \E[X^\top  P X] = \E[ \Vert X \Vert^2_P],
\end{equation}
where $P$ is the solution of the Lyapunov equation
\begin{equation} \label{eq:terminalP}
    (A+BK)^\top  P (A+BK) - P = - (Q + KRK^\top ).
\end{equation}
Then, for a given measurement $x_j \in \R^n$ we can evaluate the cost in~\eqref{eq:openloopIfSMPC} as
\begin{equation}
\begin{split}
    \sum_{k=0}^{N-1} & \Big( \mu_X(k)^\top  (Q+K^\top RK) \mu_X(k) + v_k^\top  R v_k \\
    & + \trace((Q+K^\top RK) \Sigma_X(k)) \Big) \\
    + & \mu_X(N)^\top  P \mu_X(N) + \trace(P \Sigma_X(N))
\end{split}
\end{equation}
with
\begin{align*}
    \mu_X(k+1) &= (A+BK)\mu_x(k) + Bv_k + E\mu_w, \\
    \Sigma_X(k) &= (A+BK) \Sigma_X(k) (A+BK)^\top  + \Sigma_W, \\
\end{align*}
Here, $\mu_X(k) = \E[X(k)]$, $\Sigma_X(k) = \text{Cov}(X(k))$, $\mu_W=\E[W(k)]$, and $\Sigma_W = \text{Cov}(W(k))$, and the initial condition is $(\mu_X(0),\Sigma_X(0)) = (x_j,0)$.

Moreover, we assume that the disturbance follows a Gaussian distribution, i.e., $ W(k) \sim \Normal(\mu_W,\Sigma_W)$ holds for all $j \in \N_0$,
and consider the case that the risk measure $\rho(Y)$ defining the risk-averse constraints is one of the following mappings:

\begin{enumerate}[(i)]
    \item The expected value
    \begin{equation} \label{eq:Exp}
         \E[Y] = \E^{\Prob}[Y] := \int_{\Omega} Y d\Prob.
    \end{equation}
    \item The value-at-risk 
    \begin{equation} \label{eq:VaR}
        \text{VaR}_{1-\alpha}(Y):=\inf_{t\in\R}\{t:\Prob(Y \leq t)\geq 1-\alpha\}.
    \end{equation}
    \item The conditional value-at-risk
    \begin{equation} \label{eq:CVaR}
        \text{CVaR}_{1-\alpha}(Y) := \frac{1}{\alpha}\int_0^{\alpha} \text{VaR}_{1-\gamma}(Y)d\gamma.
    \end{equation}
    \item The entropic value-at-risk
    \begin{equation} \label{eq:EVaR}
        \text{EVaR}_{1-\alpha}(Y):=\inf_{z>0}\{z^{-1}\ln(M_Y(z)/\alpha)\},
    \end{equation}
    where $M_Y(z)$ denotes the moment generating function of $Y$ at $z$, which we consider to exists.
\end{enumerate}

\begin{rem}
    We want to emphasize that the constraint $\text{VaR}_{1-\alpha}(Y) \leq d$ is equivalent to $\Prob(Y \leq d) \geq 1 - \alpha$.
    Hence, our setting does also include chance constraint formulations as a special case and thus it can be seen as a extension of \cite{Hewing2020} in terms of the class of constraints.
\end{rem}

Since $W(k)$ has a Gaussian distribution, we can conclude that for an initial value $E_0 = X_0 - E[X_0] \sim \Normal(0,\Sigma_{E_0})$ the random variable $E$ from \eqref{eq:dynamicsE} has a zero-mean Gaussian distribution for all times $k \in \N_0$, i.e, $E(k) \sim \Normal(0,\Sigma_E(k))$ with covariance 
\begin{equation*}
\begin{split}
    \Sigma_E(k+1) &= (A+BK) \Sigma_E(k) (A+BK)^\top  + \Sigma_W, \\
    \Sigma_E(0) &= \Sigma_{E_0}.
\end{split}
\end{equation*}
Using this observation we can evaluate $\rho(c_i^\top  E(j))$ and $\rho(d_i^\top KE(j))$ exactly, since for a random variable $Y \sim \Normal(\mu_Y,\sigma_Y^2)$ with mean $\mu_Y \in \R$ and variance $\sigma_Y^2 \in \R_0^+$ the risk measures \eqref{eq:Exp} -- \eqref{eq:EVaR} can be written as
\begin{equation}
    \rho(Y) = \mu_Y + \sigma_Y R(\alpha), \quad \alpha \in (0,1)
\end{equation}
with
\begin{equation} \label{eq:riskNormal}
    R(\alpha) =
    \begin{cases}
        0 & \text{if } \rho(Y) = \E[Y] \\
        \Phi^{-1}(1-\alpha) & \text{if } \rho(Y) = \text{VaR}(Y) \\
        \frac{\varphi(\Phi^{-1}(1-\alpha))}{\alpha} & \text{if } \rho(Y) = \text{CVaR}(Y) \\
        \sqrt{-2\ln(\alpha)} & \text{if } \rho(Y) = \text{EVaR}(Y)
    \end{cases}
\end{equation}
where $\varphi(x)=\frac{1}{\sqrt{2\pi}}e^{-\frac{x^2}{2}}$ is the standard normal probability density function and $\Phi(x)$ is the standard normal cumulative distribution function.

To construct the terminal set $\mathbb{Z}_f$ let us now assume that $\Sigma_{E_0} \preceq \Sigma_E^s$ holds, where $\Sigma_E^s$ is the solution of the Lyapunov equation
\begin{equation}
    \Sigma_E^s = (A+BK) \Sigma_E^s (A+BK) + \Sigma_W
\end{equation}
Then we can conclude that $\Sigma_{E}(j) \preceq \Sigma_E^s$ holds for all $j \in \N_0$ and hence,
\begin{align*}
    \sup_{j \in \N_0} \rho(c_i^\top E(j)) &\leq c_i^\top  \Sigma_E^s c_i R(\alpha),~ i=1,\ldots,m_x \\
    \sup_{j \in \N_0} \rho(d_i^\top KE(j)) &\leq d_i^\top K^\top  \Sigma_E^s K d_i R(\alpha),~ i=1,\ldots,m_u .
\end{align*}
Thus, assuming that 
\begin{align*}
    p_i &\geq c_i^\top  \Sigma_E^s c_i R(\alpha), \quad i=1,\ldots,m_x \\
    q_i &\geq d_i^\top K^\top  \Sigma_E^s K d_i R(\alpha), \quad i=1,\ldots,m_x
\end{align*}
and $0 \in \mathbb{V}$ holds, the terminal set $\mathbb{Z}_f = \{ 0 \}$
satisfies the conditions of Assumption~\ref{ass:terminalSet} with $v_f=0$ since $(z^s,v^s) = (0,0)$ is an equilibrium of the dynamic~\eqref{eq:dynamicsz}
for $\E[W(k)]=0$. 

The resulting moment-based open-loop problem can be summarized as 
\begin{equation} \label{eq:openloopIfSMPCmoments}
    \begin{split}
        \min_{\mathbf{v}} \sum_{k=0}^{N-1} & \Big( \mu_X(k)^\top  (Q+K^\top RK) \mu_X(k) + v_k^\top  R v_k \\
        & + \trace((Q+K^\top RK) \Sigma_X(k)) \Big) \\
        + & \mu_X(N)^\top  P \mu_X(N) + \trace(P \Sigma_X(N)) \\
        s.t.~\mu_X(k+1) &= (A+BK) \mu_X(k) + Bv_k  \\
        \Sigma_X(k+1) &= (A+BK) \Sigma_X(k) (A+BK)^\top  + \Sigma_W \\
        z(k+1) &= (A+BK)z(k) + Bv_k  \\
        E(k+1) &= (A+BK) \Sigma_E(k) (A+BK)^\top  + \Sigma_W \\
        U(k) &= KX(k) + v_k , ~ v_k \in \mathbb{V}, ~ z(N) = 0 \\
        \mu_X(0) =& x_j,~ \Sigma_X(0) = 0, ~ z(0) = z_j, ~\Sigma_E(0) = \Sigma_{E_j}\\
        c_i^\top z(k) &\leq p_i - c_i^\top  \Sigma_E(k) c_i R(\alpha), \quad i=1,\ldots,m_x  \\
        d_i^\top (Kz(k) &+ v_k) \leq q_i - d_i^\top K^\top  \Sigma_E(k) K d_i R(\alpha) \\
         i&=1,\ldots,m_u .
    \end{split}
\end{equation}
and the corresponding MPC scheme is given in Algorithm~\ref{alg:ifStochMPCmoments}.

Since the terminal set $\mathbb{Z}_f$ satisfies Assumption~\ref{ass:terminalSet} we directly get recursive feasibility by Theorem~\ref{thm:recursiveFeasibility} and closed-loop constraint satisfaction by Theorem~\ref{thm:constraints}.
However, to ensure also the performance bounds from Section~\ref{sec:Performance} we need to show that stochastic dissipativity holds and Assumption~\ref{ass:terminalCost} is satisfied for the terminal cost from~\eqref{eq:terminalCostLQP}.
For the simplified setting of this section this is shown by the following theorem and lemma.

\begin{thm} \label{thm:DissipativityLQP}
    Let $P^*$ be the solution of the discrete-time algebraic Riccati equation
    \begin{equation} \label{eq:DARE}
         P^* = A^\top  P^* A + Q A^\top  P^* B(R + B^\top  P^* B)^{-1} B^\top  P^* A
    \end{equation}
    and set $K^* := - (R + B^\top  P^* B)^{-1} B^\top  P^* A$.
    Furthermore, let $\Sigma_X^s$ be the solution of
    \begin{align*}
        \Sigma_X^s = (A+BK^*) \Sigma_X^s (A+BK^*)^\top  + \Sigma_W
    \end{align*}
    and assume that 
    \begin{equation} \label{eq:StationaryPairAdmissable}
    \begin{split}
        p_i &\geq c_i^\top  \Sigma_X^s c_i R(\alpha), ~ i=1,\ldots,m_x\\
        q_i &\geq d_i^\top (K^*)^\top  \Sigma_X^s (K^*) d_i R(\alpha), ~ i=1,\ldots,m_u
    \end{split}
    \end{equation}
    holds.
    Then there exits a stationary pair $(\mathbf{X}^s,\mathbf{U}^s)$ with $U^s(k) = K^*X^s(k)$, $X \sim \Normal(0,\Sigma_X^s)$ for all $k \in \N_0$, and 
    \begin{equation}
        \ell(\mathbf{X}^s,\mathbf{U}^s) =  \trace((Q+K^\top RK) \Sigma_X^s) = \trace(P^* \Sigma_W)
    \end{equation}
    such that the stochastic optimal control problem~\eqref{eq:stochOCP} under the simplifications of this section is stochastically dissipative.
\end{thm}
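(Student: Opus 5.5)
We prove the statement in three steps: construct the stationary pair, compute its cost, and then verify the dissipation inequality \eqref{eq:DissiIneq} with a quadratic storage function built from $P^*$. Throughout we take $\E[W(k)]=\mu_W=0$, as in the construction of the terminal set above. Otherwise a constant offset appears that would have to be absorbed into the stationary mean.

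\emph{Stationary pair.} Since $(A,B)$ is controllable, we assume the usual detectability condition on $Q$, so the DARE \eqref{eq:DARE} has a stabilizing solution $P^*\succeq 0$. Then $A+BK^*$ is Schur and the Lyapunov equation for $\Sigma_X^s$ has a unique positive semidefinite solution. Take $X^s(0)\sim\Normal(0,\Sigma_X^s)$ independent of the noise, and set $U^s(k)=K^*X^s(k)$. Gaussianity is preserved by the linear recursion, and the covariance is a fixed point, so $X^s(k)\sim\Normal(0,\Sigma_X^s)$ for all $k$. The pair $(X^s(k),U^s(k))$ is then jointly Gaussian with a $k$-independent law.

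\emph{Admissibility and cost.} By the representation $\rho(Y)=\mu_Y+\sigma_Y R(\alpha)$ from \eqref{eq:riskNormal}, the constraints $\rho(c_i^\top X^s(k))\le p_i$ and $\rho(d_i^\top U^s(k))\le q_i$ follow from \eqref{eq:StationaryPairAdmissable}. Here the paper's expression $c_i^\top\Sigma c_i R(\alpha)$ has to be read as a bound on $\sigma R(\alpha)$; I would simply follow the paper's convention. The stage cost is $\ell=\trace((Q+(K^*)^\top RK^*)\Sigma_X^s)$. Writing $A_K=A+BK^*$, the DARE gives the Lyapunov form $P^*=A_K^\top P^*A_K+Q+(K^*)^\top RK^*$. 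Multiplying by $\Sigma_X^s$ and using the cyclicity of the trace together with $\Sigma_X^s-A_K\Sigma_X^sA_K^\top=\Sigma_W$ yields $\ell(\mathbf{X}^s,\mathbf{U}^s)=\trace(P^*\Sigma_W)$.

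\emph{Dissipativity.} Choose the storage function $\lambda(X)=\E[X^\top P^*X]$. It is law-invariant and bounded below by $0$. For any admissible $X,U$ with $W$ independent of $(X,U)$ and zero mean, the cross terms vanish, so
\[
\lambda(AX+BU+W)=\E[(AX+BU)^\top P^*(AX+BU)]+\trace(P^*\Sigma_W).
\]
The key step is the pointwise completion-of-squares identity that follows from the DARE:
\[
x^\top Qx+u^\top Ru+x^\top P^*x-(Ax+Bu)^\top P^*(Ax+Bu)=(u-K^*x)^\top(R+B^\top P^*B)(u-K^*x)\ge 0 .
\]
Taking expectations and subtracting $\trace(P^*\Sigma_W)=\ell(\mathbf{X}^s,\mathbf{U}^s)$ gives exactly \eqref{eq:DissiIneq}, and in fact for all $X,U$, not only the admissible ones.

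The main obstacle is the bookkeeping around this identity: expanding the Riccati equation correctly (the displayed \eqref{eq:DARE} has a missing minus sign), and justifying that the cross term $\E[(AX+BU)^\top P^*W]$ vanishes. The latter needs $W(k)$ to be independent of $X(k),U(k)$ with zero mean, and finite second moments so that all expectations are finite. If some $X$ has infinite second moment, $\ell$ is infinite and the inequality holds trivially. I would first verify the completion-of-squares identity by direct expansion, and only then assemble the remaining pieces.
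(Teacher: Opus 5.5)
Your dissipativity step does not go through, because the completion-of-squares identity you state has the sign of the two $P^*$-terms reversed. Setting $u=K^*x$ makes your right-hand side vanish. By your own Lyapunov form $P^*-A_K^\top P^*A_K=Q+(K^*)^\top RK^*$, however, your left-hand side then equals $2x^\top(Q+(K^*)^\top RK^*)x$. The correct identity, which is the Bellman equation for $x^\top P^*x$, is
\[
x^\top Qx+u^\top Ru+(Ax+Bu)^\top P^*(Ax+Bu)-x^\top P^*x=(u-K^*x)^\top(R+B^\top P^*B)(u-K^*x).
\]
Consequently $\lambda(X)=\E[X^\top P^*X]$ is not a storage function for \eqref{eq:DissiIneq}. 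Your constant bookkeeping already signals this: with this $\lambda$, the term $\trace(P^*\Sigma_W)$ is subtracted twice, once as $\ell(\mathbf{X}^s,\mathbf{U}^s)$ and once inside $-\lambda(AX+BU+W)$, and nothing cancels it. Concretely, take $X=0$, $U=0$, which is admissible whenever $p_i,q_i\ge 0$. Then the left-hand side of \eqref{eq:DissiIneq} equals $-2\trace(P^*\Sigma_W)$, which is negative in any nondegenerate case.

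With the correct identity the constants do cancel for $\lambda(X)=-\E[X^\top P^*X]$. For square-integrable $(X,U)$ and zero-mean independent $W$ one gets $\ell(X,U)-\ell(\mathbf{X}^s,\mathbf{U}^s)+\lambda(X)-\lambda(AX+BU+W)=\E[(U-K^*X)^\top(R+B^\top P^*B)(U-K^*X)]\ge 0$. But this $\lambda$ is unbounded from below, so it does not satisfy Definition~\ref{defn:stochDissi}. That lower bound is exactly what the proof of Theorem~\ref{thm:lowerBoundPerformance} uses, so flipping the sign alone does not close the gap.

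The paper avoids the issue by taking dissipativity of the unconstrained problem from \cite[Theorem~3.11]{Schiessl2025b}. It then only checks, as you also do, that the stationary pair is admissible, so the inequality restricted to admissible $(X,U)$ persists.

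If you want a self-contained proof, you need a storage that is bounded below. One candidate is the available storage $\lambda_a(X)=\sup_{L\ge 0}\bigl(L\,\trace(P^*\Sigma_W)-J_L^*(X)\bigr)$. Here $J_L^*(X)=\E[X^\top P_LX]+\sum_{j=1}^{L-1}\trace(P_j\Sigma_W)$ is the unconstrained $L$-step optimal cost, and $P_j$ are the Riccati iterates started at $P_0=0$. This $\lambda_a$ has the properties you need:
\begin{enumerate}[(i)]
\item it is law-invariant;
\item it satisfies \eqref{eq:DissiIneq} by the dynamic programming inequality $J_{L+1}^*(X)\le\ell(X,U)+J_L^*(AX+BU+W)$;
\item it takes values in $\bigl[0,\sum_{j\ge 0}\trace((P^*-P_j)\Sigma_W)\bigr]$, which is finite because $P_j\uparrow P^*$ exponentially under your stabilizability and detectability assumptions.
\end{enumerate}

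The rest of your proposal is correct: the construction of the stationary pair, the computation $\ell(\mathbf{X}^s,\mathbf{U}^s)=\trace(P^*\Sigma_W)$, and your reading of the paper's typos (the missing minus sign in \eqref{eq:DARE}, $K$ versus $K^*$, and standard deviation versus variance in \eqref{eq:StationaryPairAdmissable}).
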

\begin{proof}
    By \cite[Theorem~3.11]{Schiessl2025b} we can conclude that the stochastic optimal control problem with the linear-quadratic structure of this section and without constraints is stochastically dissipative at $(\mathbf{X}^s,\mathbf{U}^s)$.
    However, since the stationary pair $(\mathbf{X}^s,\mathbf{U}^s)$ satisfies the constraints due to the assumption from \eqref{eq:StationaryPairAdmissable}, the constrained problem is also stochastically dissipative at $(\mathbf{X}^s,\mathbf{U}^s)$.
\end{proof}

\begin{lem}
    The terminal cost $F(X) = \E[X^\top  P X]$ with $P$ from equation~\eqref{eq:terminalP} satisfies Assumption~\ref{ass:terminalCost} for $v_f = 0$ and $(\mathbf{X}^s,\mathbf{U}^s)$ from Theorem~\ref{thm:DissipativityLQP}.
\end{lem}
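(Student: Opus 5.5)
The plan is to verify the inequality of Assumption~\ref{ass:terminalCost} by a direct second-moment computation, with $v_f=0$, and to read off the constant $C_f$ explicitly. I take $\E[W(k)]=\mu_W=0$, as is already needed for $\mathbb{Z}_f=\{0\}$ with $v_f=0$. I read the weight in \eqref{eq:terminalP} as $Q+K^\top RK$, consistent with the cost expression of this section. I also use that $W(N)$ is independent of $X$ with covariance $\Sigma_W$.

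\textbf{Step 1: expand the left-hand side.} Set $A_K:=A+BK$. Then
\begin{align*}
F(A_KX+W(N)) &= \E[X^\top A_K^\top P A_K X] + 2\,\E[X]^\top A_K^\top P\,\E[W(N)] + \E[W(N)^\top P W(N)] \\
&= \E[X^\top A_K^\top P A_K X] + \trace(P\Sigma_W).
\end{align*}
The cross term vanishes by independence and $\mu_W=0$.

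\textbf{Step 2: apply the Lyapunov equation.} Insert \eqref{eq:terminalP} pointwise in $\omega$, i.e. $A_K^\top PA_K = P-(Q+K^\top RK)$. Since $\ell(X,KX)=\E[X^\top(Q+K^\top RK)X]$, this gives the exact identity
\begin{equation*}
F(A_KX+W(N)) = F(X) - \ell(X,KX) + \trace(P\Sigma_W).
\end{equation*}
By Theorem~\ref{thm:DissipativityLQP}, $\ell(\mathbf{X}^s,\mathbf{U}^s)=\trace(P^*\Sigma_W)$. Hence Assumption~\ref{ass:terminalCost} holds with
\begin{equation*}
C_f := \trace\big((P-P^*)\Sigma_W\big),
\end{equation*}
for every $X\in\RR(\Omega,\R^n)$ with finite second moments.

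\textbf{Step 3: show $C_f\geq 0$.} This is the only nontrivial point. I would show $P\succeq P^*$, which gives $C_f\ge 0$ because $\Sigma_W\succeq 0$. The standard completion-of-squares identity for the Riccati equation \eqref{eq:DARE} gives, for any $K$,
\begin{equation*}
A_K^\top P^* A_K - P^* + Q + K^\top RK = (K-K^*)^\top (R+B^\top P^*B)(K-K^*) \succeq 0.
\end{equation*}
Subtracting this from \eqref{eq:terminalP} shows that $\Delta:=P-P^*$ solves
\begin{equation*}
A_K^\top \Delta A_K - \Delta = -(K-K^*)^\top (R+B^\top P^*B)(K-K^*).
\end{equation*}
Since $A_K$ is Schur, $\Delta=\sum_{t\ge0}(A_K^\top)^t (K-K^*)^\top(R+B^\top P^*B)(K-K^*)A_K^t\succeq 0$. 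In particular $C_f=0$ when $K=K^*$, which yields exact averaged optimality via Corollary~\ref{cor:performance}.

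The main obstacle is Step 3. It requires carefully verifying the Riccati completion-of-squares identity, including the intended sign in \eqref{eq:DARE}, which reads $P^*=A^\top P^*A+Q-A^\top P^*B(R+B^\top P^*B)^{-1}B^\top P^*A$. Steps 1 and 2 are routine once independence of $W(N)$ from $X$ is invoked.
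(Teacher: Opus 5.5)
Your proposal is correct. Steps 1 and 2 are the paper's own computation and give the same $C_f=\trace((P-P^*)\Sigma_W)$. You write the transposes and the weight $Q+K^\top RK$ correctly, and you state explicitly the assumption $\E[W(k)]=0$, which the paper uses only tacitly when it drops the cross term.

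The genuine difference is Step 3. The paper never compares $P$ and $P^*$ directly. Instead it argues $C_f\ge 0$ by citing \cite[Theorem~5.2]{Schiessl2025b}: stochastic dissipativity implies that $(\mathbf{X}^s,\mathbf{U}^s)$ has optimal stationary cost. Hence $\trace(P^*\Sigma_W)$ cannot exceed $\trace(P\Sigma_W)$, which is the stationary cost of the process driven by the feedback $K$.

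Your Riccati completion of squares together with the Lyapunov series is a self-contained matrix proof of the stronger ordering $P\succeq P^*$. From it you get:
\begin{enumerate}[(i)]
    \item $C_f\ge 0$ for every $\Sigma_W\succeq 0$;
    \item an explicit expression for $C_f$ as a Lyapunov-weighted measure of $K-K^*$;
    \item $C_f=0$ when $K=K^*$, which Corollary~\ref{cor:performanceMoments} relies on but the paper never verifies.
\end{enumerate}

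The paper's route is shorter and interprets $C_f$ as the excess stationary cost of the $K$-loop. However, it leaves two things implicit: that $\trace(P\Sigma_W)$ really is that stationary cost, and that the $K$-driven process is an admissible competitor in the cited optimality result. Your argument needs neither. It only requires that $A+BK$ is Schur and that $P^*\succeq 0$, so that $R+B^\top P^*B\succ 0$ and the weight $(K-K^*)^\top(R+B^\top P^*B)(K-K^*)$ is positive semidefinite.
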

\begin{proof}
    Using the relation from \eqref{eq:terminalP} and that $X$ and $W(N)$ are independent, we obtain for all $X \in \RR(\Omega,\R^n)$ that 
    \begin{align*}
        &F((A+BK)X + W(N)) \\
        =& \E[\Vert (A+BK)X + W(N) \Vert_P^2] \\
        =& \E[\Vert X \Vert_{(A+BK)P(A+BK)^\top }^2] + \E[ \Vert W(N) \Vert_P^2] \\
        =& \E[\Vert X \Vert_{P-(Q + KRK^\top )}^2]  + \E[ \Vert W(N) \Vert_P^2] \\
        =& \E[\Vert X \Vert_{P}^2] - (\E[\Vert X \Vert_{Q}^2] + \E[\Vert KX \Vert_{R}^2]) + \E[ \Vert W(N) \Vert_P^2] \\
        =& F(X) - \ell(X,KX) + \ell(\mathbf{X}^s,\mathbf{U}^s) + C_f
    \end{align*}
    with $C_f = \trace((P-P^*)\Sigma_W) \geq 0$.
    Note that $C_f \geq 0$ must hold 
    since stochastic dissipativity implies that the stationary pair $(\mathbf{X}^s,\mathbf{U}^s)$ has optimal stationary cost, cf. \cite[Theorem~5.2]{Schiessl2025b}.
\end{proof}

Based on these two results the following corollary summarizes the implications from Corollary~\ref{cor:performance} for the linear-quadratic Gaussian setting of this section.

\begin{cor} \label{cor:performanceMoments}
    Let the simplifications of this section and the assumptions of Theorem~\ref{thm:DissipativityLQP} hold.
    Then, we obtain
    \begin{equation*}
    \begin{split}
        \trace(P^* &\Sigma_W) \leq \liminf_{L \to \infty} \frac{1}{L} \sum_{k=0}^{L-1} \ell(X^{cl}(k),U^{cl}(k)) \\
        &\leq \limsup_{K \to \infty} \frac{1}{L} \sum_{k=0}^{L-1} \ell(X^{cl}(k),U^{cl}(k)) \leq \trace(P \Sigma_W).
    \end{split}
    \end{equation*}
    Moreover, if we choose $K=K^*$ as the fixed linear feedback we get
    \begin{equation*}
    \begin{split}
        \lim_{L \to \infty} \frac{1}{L} \sum_{k=0}^{L-1} \ell(X^{cl}(k),U^{cl}(k)) = \trace(P^* \Sigma_W).
    \end{split}
    \end{equation*}
\end{cor}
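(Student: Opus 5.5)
The plan is to obtain Corollary~\ref{cor:performanceMoments} as a direct specialization of Corollary~\ref{cor:performance}, using Theorem~\ref{thm:DissipativityLQP} for the dissipativity hypothesis and the preceding lemma for Assumption~\ref{ass:terminalCost}.

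\textbf{Step 1: verify the hypotheses of Corollary~\ref{cor:performance}.}
\begin{itemize}
\item[] Assumption~\ref{ass:terminalSet} holds with $\mathbb{Z}_f=\{0\}$ and $v_f=0$, by the construction given before \eqref{eq:openloopIfSMPCmoments}. This relies on the standing tightening conditions on $p_i$ and $q_i$ and on $\Sigma_{E_0}\preceq\Sigma_E^s$.
\item[] Stochastic dissipativity at $(\mathbf{X}^s,\mathbf{U}^s)$ holds by Theorem~\ref{thm:DissipativityLQP}, since we assume \eqref{eq:StationaryPairAdmissable}. That theorem also gives $\ell(\mathbf{X}^s,\mathbf{U}^s)=\trace(P^*\Sigma_W)$.
\item[] Assumption~\ref{ass:terminalCost} holds by the lemma, with $v_f=0$ and $C_f=\trace((P-P^*)\Sigma_W)$.
\end{itemize}

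\textbf{Step 2: read off the two-sided bound.} Corollary~\ref{cor:performance} then yields
\[
\trace(P^*\Sigma_W)\le\liminf\le\limsup\le \trace(P^*\Sigma_W)+\trace((P-P^*)\Sigma_W)=\trace(P\Sigma_W),
\]
which is exactly the first claim.

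\textbf{Step 3: the case $K=K^*$.} Here I would show that the Lyapunov solution $P$ of \eqref{eq:terminalP} coincides with $P^*$, so that $C_f=0$ and the second part of Corollary~\ref{cor:performance} applies. With $K=K^*$, the standard completion-of-squares identity gives
\[
(A+BK^*)^\top P^*(A+BK^*)+Q+(K^*)^\top RK^*=P^*.
\]
So $P^*$ solves the same Lyapunov equation as $P$. Since $A+BK^*$ is Schur stable, that solution is unique, hence $P=P^*$ and $C_f=0$.

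\textbf{Main obstacle.} The only genuine obstacle is bookkeeping in Step 3. The paper writes $KRK^\top$ in \eqref{eq:terminalP} and the DARE \eqref{eq:DARE} has a missing sign. I would read these as $K^\top RK$ and the standard DARE $P^*=A^\top P^*A+Q-A^\top P^*B(R+B^\top P^*B)^{-1}B^\top P^*A$. I would then verify the identity above by substituting $K^*$ and using $(R+B^\top P^*B)K^*=-B^\top P^*A$ to cancel the cross terms. Stabilizability of $(A,B)$, together with the usual detectability assumption on $Q$, guarantees that $A+BK^*$ is Schur, which is what uniqueness requires.
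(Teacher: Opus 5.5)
Your proposal is correct and matches the paper's argument, which the paper leaves implicit. You feed Theorem~\ref{thm:DissipativityLQP} and the lemma into Corollary~\ref{cor:performance}; $C_f=\trace((P-P^*)\Sigma_W)$ then gives the upper bound $\trace(P\Sigma_W)$, and $K=K^*$ forces $P=P^*$, hence $C_f=0$. Your completion-of-squares check that $P=P^*$ fills in a step the paper does not state. Schur stability of $A+BK^*$ already follows from the standing requirement that the fixed gain $K$ be stabilizing, so you do not need a separate detectability assumption.
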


\begin{algorithm}
    \caption{Moment-based indirect feedback SMPC}
    \label{alg:ifStochMPCmoments}
    \begin{algorithmic}
    \State \textbf{Input:} Fixed stabilizing feedback $K \in \R^{l \times n}$, feasible initial state $X_0 \sim \Normal(\mu_{X_0},\Sigma_{X_0})$ with $\Sigma_{X_0} \preceq \Sigma_E^s$.
        \State Measure the state $x_0 = X_0(\omega)$, set $z_0 = \mu_{X_0}$, $\Sigma_{E_0} = \Sigma_{X_0}$, $X^{cl}(0,\omega)=x_0$, $Z^{cl}(0,\omega) = z_0$.
        \For{$j=0,1,\ldots$}
            \State 1.) Solve the stochastic optimal control problem \eqref{eq:openloopIfSMPCmoments} and obtain the solution $\mathbf{v}^* := (v_0^*,\ldots,v_{N-1}^*).$
            \State 2.) Compute $\Sigma_{E_{j+1}} = (A+BK)\Sigma_{E_j}(A+BK)^\top$,  predict 
            $z_{j+1} = (A+BK)z_j + Bv_0^*$,
            and set $Z^{cl}(j+1,\omega) = z_{j+1}$.
            \State 3.) Set $V^{cl}(j,\omega) = v_0^*$, apply the feedback 
            $U^{cl}(j,\omega) = Kx_j + v_0^*$
            to system \eqref{eq:StochSystem} and measure the next state $x_{j+1} = X^{cl}(j+1,\omega)$.
        \EndFor
    \end{algorithmic}
\end{algorithm}

\section{NUMERICAL EXAMPLE} \label{sec:Numerics}
In this section we will illustrate our findings by  a DC-DC-converter regulation problem, which was already used for case studies in stochastic MPC in \cite{cannon2010stochastic,schluter2022stochastic,schluter2023stochastic}
The corresponding dynamics are of the form~\eqref{eq:StochSystem}, where
\begin{equation}
    A = \left(
    \begin{array}{cc}
       1  &  0.0075 \\
       -0.143  & 0.996
    \end{array}
    \right), \quad
    B = \left(
    \begin{array}{c}
         4.798 \\
         0.115
    \end{array}
    \right),
\end{equation}
and $W(k) \sim \Normal(0,\Sigma_W)$ with $\Sigma_W = 0.1 I_2 \in \R^{2 \times 2}$.
Additionally, we consider quadratic costs of the form \eqref{eq:stagecostLQP} with $Q = \text{diag}(1,10)$ and $R=5$ and impose a single risk-averse constraint on the first component given by 
\begin{equation} \label{eq:constr_expl}
    \rho(X_1(k)) \leq 2
\end{equation}
where $\rho(Y)$ denotes one of the risk measures from equation ~\eqref{eq:Exp} -- \eqref{eq:EVaR} with $1-\alpha = 0.6$.

To obtain the closed-loop quantities, we generated 15\,000 samples using Algorithm~\ref{alg:ifStochMPCmoments} with $K = K^*$ from Theorem~\ref{thm:DissipativityLQP} and a deterministic initial value $X_0 = (1.8, 1.5)^\top$.
Figure~\ref{fig:constraints} shows the evolution of $\mathbb{E}[X^{cl}_1(k)]$, $\text{VaR}_{0.6}(X^{cl}_1(k))$, $\text{CVaR}_{0.6}(X^{cl}_1(k))$, and $\text{EVaR}_{0.6}(X^{cl}_1(k))$ for different choices of $\rho \in \{\mathbb{E}$, $\text{VaR}_{0.6}$, $\text{CVaR}_{0.6}$, $\text{EVaR}_{0.6}\}$.
We clearly observe that the constraints are always satisfied, as predicted by Theorem~\ref{thm:constraints}.

Furthermore, for all $\alpha \in (0,1)$, $Z \in \RR(\Omega,\R)$ it holds that
\begin{equation*} 
    \mathbb{E}(Z) \leq \text{VaR}_{1-\alpha}(Z) \leq \text{CVaR}_{1-\alpha}(Z) \leq \text{EVaR}_{1-\alpha}(Z).
\end{equation*}
Consequently, the restrictiveness of the constraints follows the same ordering, which is also observable in Figure~\ref{fig:constraints}.

\begin{figure}
    \centering
    \includegraphics[width=0.9\linewidth, trim={0.1cm 1.05cm 0.1cm 0.93cm}, clip]{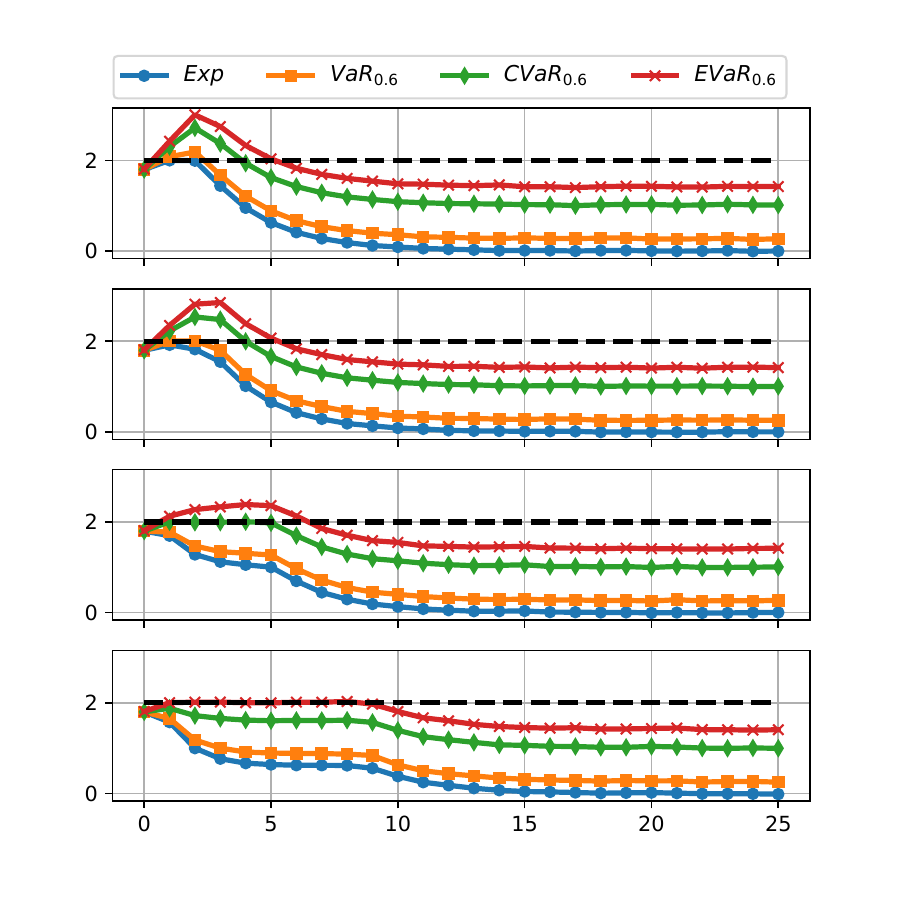}
    \caption{Evolution of $\E[X^{cl}_1(k)]$ (blue), $\text{VaR}_{0.6}(X^{cl}_1(k))$ (orange), $\text{CVaR}_{0.6}(X^{cl}_1(k))$ (green), and $\text{EVaR}_{0.6}(X^{cl}_1(k))$ (red) as well as the upper constraint bound $p = 2$ (dashed black). The subplots (top to bottom) correspond to the constraints~\eqref{eq:constr_expl} with $\rho = \mathbb{E}$, $\rho = \text{VaR}_{0.6}$, $\rho = \text{CVaR}_{0.6}$, and $\rho = \text{EVaR}_{0.6}$, respectively.}
    \label{fig:constraints}
\end{figure}

To compare the performance for different choices of $K$ in Algorithm~\ref{alg:ifStochMPCmoments}, we constructed a second stabilizing feedback
by solving the algebraic Ricatti equation~\eqref{eq:DARE} for $\tilde{Q} := 0.01 Q$ and $\tilde{R} := 200 R$.
We then ran Algorithm~\ref{alg:ifStochMPCmoments} again using these different choices of $K$, where the constraints in \eqref{eq:constr_expl} were defined using the conditional value-at-risk $\text{CVaR}_{0.6}$.

As Corollary~\ref{cor:performanceMoments} indicates, for $K = K^*$ the averaged performance should converge to the optimal stationary cost, as can be seen in Figure~\ref{fig:performance}.
However, for $K = \tilde{K}$, Corollary~\ref{cor:performanceMoments} only guarantees that the averaged performance satisfies a suboptimal bound; convergence to the optimal stationary cost is therefore not ensured.
Figure~\ref{fig:performance} shows that the averaged closed-loop performance indeed converges to a value deviating from the optimal stationary cost, demonstrating that the choice of $K$ affects the asymptotic performance of the closed-loop solution not only theoretically but also in practice.

\begin{figure}
    \centering
    \includegraphics[width=0.7\linewidth, trim={0.1cm 0.4cm 0.1cm 0.2cm}, clip]{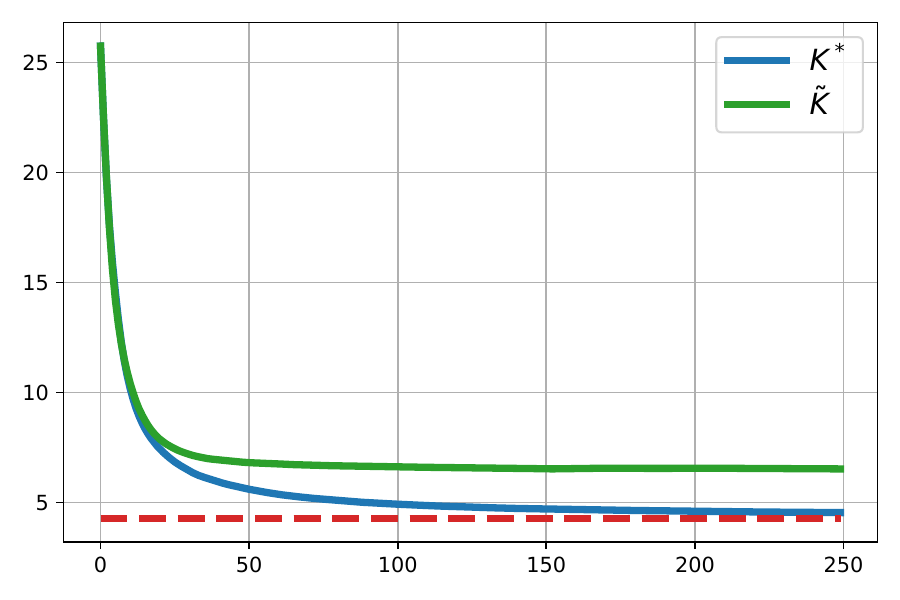}
    \caption{Averaged closed-loop performance of Algorithm~\ref{alg:ifStochMPCmoments} for $K = K^*$ (blue) and $K=\tilde{K}$ (green) if choosing $\rho=\text{VaR}_{0.6}$ in~\eqref{eq:constr_expl} as well as the optimal stationary cost (red dashed).}
    \label{fig:performance}
\end{figure}

\section{CONCLUSION} \label{sec:Conclusion}

We presented an indirect feedback approach for stochastic MPC with linear systems and general risk-averse constraints defined via risk measures.
For this algorithm, we derived near-optimal performance bounds for general cost functions.
Future research should focus on developing efficient implementations of Algorithm~\ref{alg:ifStochMPC} without the simplifications introduced in Section~\ref{sec:GaussianSetting}, constructing suitable terminal costs for the non-quadratic case, or extending the presented results to nonlinear systems, as in \cite{kohler2025predictive} for the original chance-constrained algorithm.

{\bibliographystyle{abbrv}
  \bibliography{references} 
}

\end{document}